\newtheorem{theorem}{Theorem}[section]
\newtheorem*{theorem*}{Theorem}
\newtheorem{lemma}{Lemma}[section]
\theoremstyle{remark}
\newcommand{\CC}{\mathds{C}}
\newcommand{\RR}{\mathds{R}}
\newcommand{\NN}{\mathds{N}}
\newcommand{\EE}{\mathbb{E}}
\newcommand{\PP}{\mathbb{P}}
\begin{document}
\title{Law of the iterated logarithm for a random Dirichlet series}
\author{Marco Aymone, Susana Fr\'ometa, Ricardo Misturini}
\begin{abstract}
Let $(X_n)_{n\in \mathds N}$ be a  sequence of \textit{i.i.d.} random variables with distribution $\mathbb P(X_1=1)=\mathbb P(X_1=-1)=1/2$. Let $F(\sigma)=\sum_{n=1}^\infty X_nn^{-\sigma}$. We prove that the following holds almost surely
\begin{equation*}
\limsup_{\sigma\to 1/2^+}\frac{F(\sigma)}{\sqrt{2\mathbb E F(\sigma)^2\log\log \mathbb E F(\sigma)^2}}=1.
\end{equation*}
\end{abstract}

\maketitle

\section{Introduction.}  Let $\mathcal{S}=\{1\leq n_1<n_2<\ldots\}$ be a set of non-negative real numbers and $(a_n)_{n\in\mathcal{S}}$ be a sequence of complex numbers. A Dirichlet series is a series of the form $F(s)=\sum_{n\in\mathcal{S}} a_n n^{-s}$, where $s$ is a complex number $s=\sigma+it$. A standard result for series of this type is that if $F$ converges at $s=s_0$, then it converges at all $s\in\CC$ with $Re(s)>Re(s_0)$, and $F$ defines an analytic function in the half plane $\{s\in\CC:Re(s)>Re(s_0)\}$. Hence, when $F$ converges at some point $s_0$, the following abscissa of convergence is well defined: $\sigma_c:=\inf\{\sigma\in\RR: F(\sigma)\mbox{ converges }\}$. 

An important example of a Dirichlet series is the Riemann $\zeta$ function:
\begin{equation*}
\zeta(\sigma):=\sum_{n=1}^\infty \frac{1}{n^\sigma}.
\end{equation*} 
It follows that $\zeta (\sigma)$ has abscissa of convergence $\sigma_c=1$. Moreover, $\zeta(\sigma)$ has a singularity at $s=1$. Indeed, as $\sigma\to 1^+$, $\zeta(\sigma)\sim\frac{1}{\sigma-1}$.

The study of the behavior of a Dirichlet series near its line of abscissa of convergence $\sigma_c$ is classical in Analysis and in Analytic Number Theory. For instance, one can obtain the prime number Theorem -- the statement that the number of primes below $x$ is asymptotically $x/\log x$  -- from the classical Wiener-Ikehara Theorem, a Tauberian result; see, for instance Chapter II.7 of \cite{tenenbaumlivro}.

Let $(X_n)_{n\in\NN}$ be  \textit{i.i.d.} random variables with $\PP(X_1=1)=\PP(X_1=-1)=1/2$. In this paper we are interested in the behavior of the random Dirichlet series
\begin{equation}\label{definition of F}
F(\sigma):=\sum_{n=1}^\infty \frac{X_n}{n^\sigma}   
\end{equation}
near its abscissa of convergence $\sigma_c$. By the Kolmogorov's one-series Theorem, $F(\sigma)$ converges if and only if $\sigma>1/2$, and thus $\sigma_c=1/2$. 

We say that a Dirichlet series is analytic in its abscissa of convergence if this Dirichlet series has an analytic continuation to the open set consisted of the union of the half plane $Re(s)>\sigma_c$ with an open ball with some positive radius and centered at $\sigma_c$. It is important to observe that if such analytic continuation exists, then it is unique. In this terminology, sometimes a Dirichlet series may be analytic in its abscissa of convergence $\sigma_c$, for example, the Dirichlet $\eta$ function $\eta(\sigma)=\sum_{n=1}^\infty (-1)^{n+1}n^{-\sigma}$ that has $\sigma_c=0$. Indeed, the Riemann $\zeta$ function  has analytic continuation to $\CC\setminus\{1\}$ with a simple pole at $s=1$, and for $s\neq 1$ in the half  plane $Re(s)>0$ we have the formula $\eta(s)=(1-2^{1-s})\zeta(s)$. Since $(1-2^{1-s})$ is an entire function and has a zero at $s=1$, we obtain that $\eta(s)$ has analytic continuation to $\CC$, in particular it is analytic in an open set containing its abscissa of convergence. On the other hand, sometimes a Dirichlet series has a singularity in its abscissa of convergence, which is, for instance the case of our Random Dirichlet series $F(\sigma)$; see, for instance, Theorem 4, pg. 44 of the book of Kahane \cite{kahane}.

In \cite{aymonerealzerosofdirichletseries}, it has been shown that, with probability 1, the function $F$ has infinitely many zeroes accumulating at $1/2$. To prove that, the following Central Limit Theorem has been established: $F(\sigma)/\sqrt{\EE F(\sigma)^2}\to_d\mathcal{N}(0,1)$, as $\sigma\to1/2^+$, where $\mathcal{N}(0,1)$ stands for the standard Gaussian distribution. Moreover, it has been proved that,  \textit{almost surely},
\begin{equation*}
\limsup_{\sigma\to 1/2^+}\frac{F(\sigma)}{\sqrt{\EE F(\sigma)^2}} =\infty.
\end{equation*}
Thus, a natural question is what is the asymptotics of $F(\sigma)$ as $\sigma\to1/2^+$. Our main result states:

\begin{theorem}\label{teorema principal} Let $F(\sigma)$ be the random Dirichlet series defined in \eqref{definition of F}. Then
\begin{equation*}
\limsup_{\sigma\to 1/2^+}\frac{F(\sigma)}{\sqrt{2\EE F(\sigma)^2\log \log \EE F(\sigma)^2}}=1, \mbox{ almost surely}.
\end{equation*}
\end{theorem}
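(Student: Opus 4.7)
The plan is to follow the standard Kolmogorov--LIL template: establish matching upper and lower bounds along a discrete subsequence $\sigma_k\downarrow 1/2$ via Borel--Cantelli, then extend the upper bound to the continuum by a maximal inequality controlling the fluctuations between consecutive $\sigma_k$'s. Throughout, let $V(\sigma):=\EE F(\sigma)^2=\zeta(2\sigma)\sim (2\sigma-1)^{-1}$ and $\Psi(\sigma):=\sqrt{2V(\sigma)\log\log V(\sigma)}$. The basic concentration input is Hoeffding's inequality applied to the partial sums $\sum_{n\leq N}X_n n^{-\sigma}$, which passes through Fatou on the moment generating function to give the sub-Gaussian tail bound $\PP(F(\sigma)>t)\leq\exp(-t^2/(2V(\sigma)))$.

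For the \textbf{upper bound}, fix $b=1+\varepsilon$ and choose $\sigma_k$ so that $V(\sigma_k)=b^k$. The sub-Gaussian estimate yields $\PP(F(\sigma_k)>(1+\varepsilon)\Psi(\sigma_k))\leq(k\log b)^{-(1+\varepsilon)^2}$, summable, so Borel--Cantelli gives $\limsup_k F(\sigma_k)/\Psi(\sigma_k)\leq 1+\varepsilon$ almost surely. To upgrade to the continuous limsup, I would bound $\sup_{\sigma\in[\sigma_{k+1},\sigma_k]}|F(\sigma)-F(\sigma_k)|$. A direct computation using $\zeta(s)\sim (s-1)^{-1}$ shows $\mathrm{Var}(F(\sigma)-F(\sigma_k))=V(\sigma)+V(\sigma_k)-2\zeta(\sigma+\sigma_k)=O(\varepsilon^2 V(\sigma_k))$. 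Combining the integral representation $F(\sigma)-F(\sigma_k)=-\int_\sigma^{\sigma_k}F'(t)\,dt$ with Cauchy--Schwarz and a maximal/chaining bound for the sub-Gaussian process $\sigma\mapsto F(\sigma)$ on $[\sigma_{k+1},\sigma_k]$ controls the supremum by $O(\varepsilon\,\Psi(\sigma_k))$ off a summable-probability event, and sending $\varepsilon\to 0$ gives the desired $\limsup\leq 1$.

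For the \textbf{lower bound}, independence is essential, so I would partition $\NN$ into disjoint blocks $I_k=[N_k,N_{k+1})$ and work with the independent random variables $T_k:=\sum_{n\in I_k}X_n n^{-\sigma_k}$. Writing $\delta_k:=\sigma_k-1/2$, the key constraints are (i) the block captures essentially the full variance, i.e.\ $v_k^2:=\mathrm{Var}(T_k)\sim V(\sigma_k)$, which forces $\delta_k\log N_k\to 0$ and $\delta_k\log N_{k+1}\to\infty$, hence $\delta_k/\delta_{k-1}\to 0$ and $V(\sigma_k)$ growing super-geometrically; and (ii) the target tail probability still has divergent sum. These can be simultaneously met, for instance by $V(\sigma_k)=k^k$ with $\log N_k$ chosen between $1/\delta_{k-1}$ and $1/\delta_k$. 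A Cram\'er-type moderate-deviations estimate --- valid because the summands in $T_k$ are bounded by $N_k^{-\sigma_k}=o(v_k/\sqrt{\log\log v_k^2})$ --- then produces $\PP(T_k>(1-\varepsilon)\sqrt{2v_k^2\log\log v_k^2})\gtrsim (\log v_k^2)^{-(1-\varepsilon)^2}/\sqrt{\log\log v_k^2}$, whose sum in $k$ diverges. Independence of the $T_k$ and the second Borel--Cantelli lemma produce the infinitely-often event, while the residual $F(\sigma_k)-T_k$, having variance $o(V(\sigma_k))$, is $o(\Psi(\sigma_k))$ almost surely by another Hoeffding plus first Borel--Cantelli argument.

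The principal technical obstacle is ensuring that the sharp lower tail of $T_k$ holds with the correct constant at the moderate-deviations scale $\sqrt{\log\log v_k^2}$, well beyond the Berry--Esseen regime. A true Cram\'er-type theorem is required, and its hypotheses constrain the rates of $\delta_k$ and $N_k$ in a way that must be reconciled simultaneously with the block-variance condition $v_k^2\sim V(\sigma_k)$ and the negligibility of the residual $F(\sigma_k)-T_k$. Balancing these three interlocked rate conditions is the main delicacy of the argument.
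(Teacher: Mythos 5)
Your outline is correct and it follows the same Kolmogorov-LIL template as the paper (discrete subsequence plus Borel--Cantelli, independent blocks plus a Cram\'er-type lower tail for the lower bound, subsequence plus fluctuation control for the upper bound), but the spacing choices and the resulting technical machinery differ in a meaningful way. For the upper bound you take the classical geometric spacing $\EE F(\sigma_k)^2=(1+\varepsilon)^k$, obtain $\limsup_k F(\sigma_k)/\Psi(\sigma_k)\le 1+\varepsilon$, and then control the oscillation on $[\sigma_{k+1},\sigma_k]$ by a chaining bound giving $O(\varepsilon\,\Psi(\sigma_k))$ off a summable event, sending $\varepsilon\to 0$ at the end; this does work (the covering radius of $[\sigma_{k+1},\sigma_k]$ in the intrinsic $L^2$ metric is $\sim\varepsilon\sqrt{\EE F(\sigma_k)^2}$, and a dyadic chaining with thresholds $\lambda_{k,l}^2\asymp \varepsilon^2\EE F(\sigma_k)^2\,(l+\log\log\EE F(\sigma_k)^2)/4^l$ yields both the $O(\varepsilon\Psi)$ bound and a tail $\lesssim(\log\EE F(\sigma_k)^2)^{-A}$ summable in $k$). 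The paper instead takes a sub-geometric spacing $\EE F(\sigma_k)^2\approx\exp(k^{1-\delta})$, so that $(\sigma_{k-1}-\sigma_k)/(2\sigma_k-1)\to 0$, which makes the dyadic increments have an extra $k^{-\delta}$ factor and gives fluctuations $\ll\sqrt{\EE F(\sigma_k)^2}=o(\Psi(\sigma_k))$ with no $\varepsilon$ tuning at this stage. Two remarks on what is only gestured at in your sketch: (i) the phrase ``integral representation plus Cauchy--Schwarz'' does not by itself produce the needed supremum bound (controlling $\max|F'|$ pathwise is exactly as hard as the original problem), so the real content of that step must be the dyadic/generic-chaining argument, which is what the paper's Lemma \ref{lemma controle entre os intervalos} carries out; (ii) for the lower bound you invoke an off-the-shelf Cram\'er moderate-deviations theorem, whereas the paper proves the lower tail from scratch via exponential tilting (Lemmas \ref{lowerbound} and \ref{properties_t0}) --- a self-contained proof in the spirit of Bovier--Picco --- so if you go your route you must verify that the triangular-array conditions of the theorem you cite are compatible with the block-variance constraints $\delta_k\log N_k\to 0$, $\delta_{k-1}\log N_k\to\infty$ and the super-geometric growth $\EE F(\sigma_k)^2=k^k$, which you correctly flag as the main delicacy. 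Both routes reach the same conclusion; the paper's is more self-contained and its spacing makes the oscillation estimate cleaner, while yours stays closer to the textbook LIL proof.
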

Since $F(\sigma)$ is a symmetric random variable, we have the $\liminf$ of the same quantity above equals to $-1$.

As $\sigma\to1/2^+$, $\EE F(\sigma)^2/(2\sigma-1)^{-1}\to 1$ (see Lemma \ref{lemma riemann sums}), hence Theorem \ref{teorema principal} is equivalent to: Almost surely
\begin{equation*}
\limsup_{\sigma\to 1/2^+}\frac{F(\sigma)}{\sqrt{\frac{2}{2\sigma-1}\log \log \frac{1}{2\sigma-1}}}=1.
\end{equation*} 
 
 Theorem \ref{teorema principal} is the corresponding Law of the Iterated Logarithm (LIL) for the random Dirichlet series $F(\sigma)$. For the random geometric series, $G(\beta):=\sum_{n=0}^\infty X_n\beta^n$, studied by Bovier and Picco in  \cite{bovierepicolimittheorems} and \cite{bovierrandomgeometric}, it has been established that, almost surely
\begin{equation*} 
\limsup_{\beta\to 1^-} \frac{G(\beta)}{\sqrt{2\EE G(\beta)^2 \log\log \EE G(\beta)^2}}=1.
\end{equation*}

 The main issue to obtain these results is that, in comparison with the classical LIL for the simple random walk, we do not have at our disposal a similar result to the Levy's maximal inequality:
 \begin{equation}\label{equation Levy maximal}
\PP\bigg{(}\max_{1\leq m \leq n}\bigg{|}\sum_{k=1}^m X_k \bigg{|}\geq t \bigg{)}\leq 3 \max_{1\leq m \leq n}\PP\bigg{(}\bigg{|}\sum_{k=1}^m X_k \bigg{|}\geq \frac{t}{3} \bigg{)}.
\end{equation}

In the classical proof of the LIL for $S(x)=\sum_{n\leq x}X_n$, the size of $S(x_k)$ is controlled along a sequence $x_k\to\infty$, and the size of $S(x)$ for $x\in[x_k,x_{k+1}]$ is controlled via (\ref{equation Levy maximal}). In our case and in the random geometric case, the supremum is taken over continuous parameters and a maximal inequality is not available. 

The proof of Theorem \ref{teorema principal} is divided into two main steps: an upper bound and a lower bound. For the lower bound we follow the ideas of \cite{bovierepicolimittheorems} to show that for any $\gamma>0$, there is a sequence $\sigma_k\to 1/2^+$ such that, \textit{almost surely}, 
\begin{equation}\label{equacao cota inferior} 
\limsup_{k\to\infty} \frac{F(\sigma_k)}{\sqrt{\EE F(\sigma_k)^2\log\log \EE F(\sigma_k)^2}}\geq 1-\gamma.
 \end{equation}
To show that, one main ingredient is to find a lower bound for 
\begin{equation}\label{intro lower}
    \PP\left(\frac{F(\sigma_k)}{\sqrt{\EE F(\sigma_k)^2\log\log \EE F(\sigma_k)^2}}\geq 1-\gamma\right)
\end{equation}
using standard large deviation techniques, and this is made in Lemma \ref{lowerbound}. We conclude the proof of the lower bound using the second Borel-Cantelli lemma, and for that, we will construct independent events that are asymptotic equivalent to those in \eqref{intro lower}, as $k\to\infty$.

For the upper bound, we show that over an specific sequence $\sigma_k\to 1/2^+$, 
\begin{equation}\label{equacao cota superior} 
\limsup_{k\to\infty} \frac{F(\sigma_k)}{\sqrt{\EE F(\sigma_k)^2\log\log \EE F(\sigma_k)^2}}\leq 1+\gamma.
 \end{equation}
 Then we control the the size of $F(\sigma)$ for $\sigma\in[\sigma_k,\sigma_{k-1}]$ by following an approach different from the one in \cite{bovierepicolimittheorems}, where it was used a renormalization idea that is suitable for geometric series. Here we argue as in the proof of the Kolmogorov-\v{C}entsov Theorem; see, for instance, Chapter 2.2 of \cite{karatzaslivro}. Indeed, we consider a dyadic partition of each interval $[\sigma_k,\sigma_{k-1}]$, that is, intervals of the form $[\tau_{l,n}(k),\tau_{l,n+1}(k)]$ where $\tau_{l,n}(k)=\sigma_k+\frac{n}{2^l}(\sigma_{k-1}-\sigma_k)$. Then we exploit the fact that $F(\sigma)$ is differentiable as a function of $\sigma$, and, with that, we control the size of the difference of $F$ at  consecutive elements of the dyadic partition: $|F(\tau_{l,n}(k))-F(\tau_{l,n+1}(k))|$. 
 
 Here we present some heuristics that will give us the intuition of the bound that will be obtained in Lemma \ref{lemma controle entre os intervalos}. We have by the mean value theorem that 
 \begin{equation*}
 |F(s)-F(t)|\leq |s-t|\max_{u\in[s,t]}|F'(u)|,
 \end{equation*}
  and this inequality is nearly optimal if $F'$ is continuous and $s$ and $t$ are close to each other. On the one hand, the derivative of a Dirichlet series is an analytic function, since it is also a Dirichlet series with same abscissa $\sigma_c$: $F'(\sigma)=-\sum_{n=1}^\infty X_n n^{-\sigma}\log n$. On the other hand, by standard estimates, for $\sigma$ close to $1/2^+$, $\EE F'(\sigma)^2 =\sum_{n=1}^\infty n^{-2\sigma}(\log n)^2\sim \frac{1}{(2\sigma-1)^3}$. Then we show that, if $s,t\in[\sigma_k,\sigma_{k-1}]$, $|F(s)-F(t)|$ is bounded above by something that behaves as 
\begin{align*}
\sqrt{\EE |F(\sigma_k)-F(\sigma_{k-1})|^2}&\leq |\sigma_k-\sigma_{k-1}|\max_{u\in[\sigma_k,\sigma_{k-1}]}\sqrt{\EE F'(u)^2}\\
&\ll \frac{|\sigma_k-\sigma_{k-1}|}{(2\sigma_k-1)^{3/2}}\\
&\ll\frac{|2\sigma_k-1|}{(2\sigma_k-1)^{3/2}}\\
&= \frac{1}{(2\sigma_k-1)^{1/2}}\\
&\leq\sqrt{\EE F(\sigma_k)^2},
\end{align*}
where, in the third line above it is used a particular property of the chosen sequence~$\sigma_k$. Combining this with (\ref{equacao cota superior}), we obtain the upper bound
\begin{equation*}
\limsup_{\sigma\to1/2^+} \frac{F(\sigma)}{\sqrt{\EE F(\sigma)^2\log\log \EE F(\sigma)^2}}\leq 1+\gamma.
 \end{equation*}

\section{Preliminaries}
\subsection{Notation} Here we use $f(x)\ll g(x)$ whenever there exists a constant $c>0$ such that $|f(x)|\leq c |g(x)|$, in a certain range of $x$ -- This range could be all the interval $x\in[0,\infty)$ or $x\in (a-\delta,a+\delta)$, $a\in \RR,\delta>0$. We say that $f(x)\sim g(x)$ if $\lim\frac{f(x)}{g(x)}=1$. 

Here, $F(\sigma)=\sum_{n=1}^\infty \frac{X_n}{n^\sigma}$, where $X_n$ are \textit{i.i.d.} random variables with $\PP(X_1=1)=\PP(X_1=-1)=1/2$. By the Kolmogorov's one-series Theorem, it follows that $F(\sigma)$ is convergent for all $\sigma>1/2$ and divergent for $\sigma\leq 1/2$. Moreover, for $s=\sigma+it$, in the half plane $Re(s)>1/2$, $F(s)$ is an analytic function; see Chapter I of \cite{montgomerylivro}.

\subsection{Estimates for the Riemann $\zeta$ function}
We begin with some standard estimates for the Riemann $\zeta$ function. These are classical, and we provide a proof here for the convenience of the reader.
\begin{lemma}\label{lemma riemann sums} Let $\sigma>1$. As $\sigma\to1$, $\zeta(\sigma)$ is of the order of $\frac{1}{\sigma-1}$, in fact we have that
\begin{equation*}
\frac{1}{\sigma-1}\leq \zeta(\sigma)\leq \frac{\sigma}{\sigma-1}.    
\end{equation*}
Moreover, for any $M>1$  
\begin{align*}
&\sum_{n=1}^M \frac{1}{n^\sigma}\leq \frac{1}{\sigma-1}\bigg{(}\sigma-\frac{1}{M^{\sigma-1}} \bigg{)}\\
&\sum_{n>M}\frac{1}{n^\sigma}\leq \frac{1}{(\sigma-1)M^{\sigma-1}}.
\end{align*}
\end{lemma}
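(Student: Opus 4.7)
The proof rests on a single elementary tool: integral comparison for the monotone decreasing function $f(x) = x^{-\sigma}$ on $[1,\infty)$, which for $\sigma>1$ is positive, continuous, and integrable. The plan is to bound partial sums of $n^{-\sigma}$ from above and below by integrals and then evaluate $\int x^{-\sigma}\,dx$ explicitly.

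For the upper bound in the first assertion, I would split $\zeta(\sigma) = 1 + \sum_{n\geq 2} n^{-\sigma}$ and note that, since $f$ is decreasing, $n^{-\sigma} \leq \int_{n-1}^{n} x^{-\sigma}\,dx$ for every $n\geq 2$. Summing telescopes to $\int_1^\infty x^{-\sigma}\,dx = \tfrac{1}{\sigma-1}$, which gives $\zeta(\sigma) \leq 1 + \tfrac{1}{\sigma-1} = \tfrac{\sigma}{\sigma-1}$. For the lower bound I would use the reverse inequality $n^{-\sigma} \geq \int_n^{n+1} x^{-\sigma}\,dx$, whose sum over $n\geq 1$ equals $\int_1^\infty x^{-\sigma}\,dx = \tfrac{1}{\sigma-1}$.

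For the second inequality, the same upper estimate applied only up to $N=M$ gives
\begin{equation*}
\sum_{n=1}^M \frac{1}{n^\sigma} \leq 1 + \int_1^M x^{-\sigma}\,dx = 1 + \frac{1 - M^{1-\sigma}}{\sigma-1} = \frac{1}{\sigma-1}\left(\sigma - \frac{1}{M^{\sigma-1}}\right),
\end{equation*}
which is exactly the claimed bound. For the third inequality, summing the upper comparison $n^{-\sigma} \leq \int_{n-1}^{n} x^{-\sigma}\,dx$ over $n > M$ (or equivalently using $n^{-\sigma} \le \int_{n-1}^n x^{-\sigma} dx$ for $n\ge M+1$) yields $\sum_{n>M} n^{-\sigma} \leq \int_M^\infty x^{-\sigma}\,dx = \tfrac{1}{(\sigma-1)M^{\sigma-1}}$.

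There is no real obstacle here: the three statements are each a one-line integral comparison, and the only thing to be careful about is which endpoint one integrates from so as to obtain the sharp constant ($1$ in front on the upper bound, $\sigma$ after clearing denominators). I would simply present the monotonicity inequality once at the start, then apply it in the three specific ranges required.
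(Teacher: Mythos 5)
Your proof is correct and uses exactly the same integral comparison for the decreasing function $x^{-\sigma}$ as the paper, merely phrasing the comparison termwise before summing rather than stating the resulting integral bounds directly. No gap, and no meaningful difference in approach.
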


\begin{proof}
Since the function $f(t)=1/t^\sigma$ is decreasing for $t>0$, we can compare the sum with the integral obtaining
\begin{equation*}
\int_1^{M+1}\frac{1}{t^\sigma}dt\leq \sum_{n=1}^M\frac{1}{n^\sigma}\leq1+\int_1^M\frac{1}{t^\sigma}dt \quad \text{and} \quad \sum_{n=M+1}^\infty \frac{1}{n^\sigma} \leq \int_{M}^\infty\frac{1}{t^\sigma}dt,
\end{equation*}
which gives the desired estimates.
\end{proof}



\subsection{Some basic results for $\sum_{k=1}^\infty a_kX_k$}

\begin{lemma}\label{infiteproduct}
	Let $\{X_k\}_{k\geq 1}$ be a sequence of \textit{i.i.d.} random variables with $\PP (X_1=1)=\PP (X_1=-1)=1/2$, and $\{a_k\}_{k\geq 1}$ a sequence of real numbers such that $\sum_{k=1}^\infty a_k^2<\infty$, then
	\begin{equation*}
	\EE \left[\exp\left(\sum_{k=1}^\infty a_kX_k\right)\right]=\prod_{k=1}^{\infty}\EE \left[\exp(a_kX_k)\right]\leq\exp\left(\frac{1}{2}\sum_{k=1}^\infty a_k^2\right)<\infty.
	\end{equation*} 
\end{lemma}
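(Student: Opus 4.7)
The plan is to first establish the identity and inequality for finite partial sums by independence and an elementary Taylor-series comparison, then pass to the infinite limit using an $L^2$-uniform-integrability argument.

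\textbf{Step 1 (one-term bound).} For a single Rademacher $X_k$ one computes directly
\[
\EE[\exp(a_k X_k)] = \tfrac{1}{2}(e^{a_k}+e^{-a_k})=\cosh(a_k).
\]
Comparing Taylor series term by term, using $(2n)!\geq 2^n n!$, gives the standard inequality $\cosh(a_k)\le \exp(a_k^2/2)$.

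\textbf{Step 2 (finite product).} By independence of the $X_k$, for every finite $N$,
\[
\EE\!\left[\exp\!\left(\sum_{k=1}^N a_kX_k\right)\right]=\prod_{k=1}^N \EE[\exp(a_kX_k)]\le\exp\!\left(\tfrac{1}{2}\sum_{k=1}^N a_k^2\right)\le\exp\!\left(\tfrac{1}{2}\sum_{k=1}^\infty a_k^2\right).
\]
Since the assumption $\sum_k a_k^2<\infty$ implies convergence, by Kolmogorov's one-series theorem, of the partial sums $S_N:=\sum_{k=1}^N a_kX_k$ almost surely to $S:=\sum_{k=1}^\infty a_kX_k$, the finite product $\prod_{k=1}^N\cosh(a_k)$ also converges (since $0<\log\cosh(a_k)\le a_k^2/2$ is summable) to the infinite product $\prod_{k=1}^\infty\EE[\exp(a_kX_k)]$.

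\textbf{Step 3 (passage to the limit — the main step).} The only remaining issue is to show that $\EE[\exp(S_N)]\to\EE[\exp(S)]$. The trick is to apply Step 2 with $2a_k$ in place of $a_k$, which yields
\[
\sup_{N\ge 1}\EE[\exp(S_N)^2]=\sup_{N\ge 1}\EE[\exp(2S_N)]\le\exp\!\left(2\sum_{k=1}^\infty a_k^2\right)<\infty.
\]
Thus $\{\exp(S_N)\}_{N\ge 1}$ is bounded in $L^2(\Omega)$, hence uniformly integrable. Combined with the almost-sure convergence $\exp(S_N)\to\exp(S)$, uniform integrability gives $L^1$-convergence, and in particular $\EE[\exp(S_N)]\to\EE[\exp(S)]$. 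Passing to the limit on both sides of the identity in Step 2 completes the proof.

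The main obstacle is precisely Step 3: one cannot dominate $\exp(S_N)$ by an integrable function in an obvious way, and monotone convergence does not apply directly since $\exp(S_N)$ is not monotone in $N$. Using the same computation to upgrade the bound from $L^1$ to $L^2$ and then invoking uniform integrability is the crucial observation. Once that is in place, the rest is a routine combination of independence and the Taylor-series estimate $\cosh(x)\le e^{x^2/2}$.
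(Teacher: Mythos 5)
Your proof is correct. The only genuine difference from the paper's argument is in the passage to the limit (your Step~3). You observe that the family $\{\exp(S_N)\}_N$ is bounded in $L^2$ (via the same one-term bound applied with $2a_k$), hence uniformly integrable, and combine this with almost-sure convergence to get $L^1$ convergence directly. The paper instead notes that $Y_n=\exp(S_n)$ is a non-negative submartingale, invokes Doob's $L^2$ maximal inequality together with the same $L^2$ estimate to obtain $\EE[\sup_n Y_n]<\infty$, and then applies dominated convergence with $\sup_n Y_n$ as the dominating function; it also first uses Fatou to get the $\le$ direction separately. The underlying quantitative ingredient is identical — the bound $\EE[Y_N^2]\le\exp(2\sum_k a_k^2)$ — so the two proofs are siblings. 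Your route via uniform integrability and Vitali's theorem is a bit more economical, avoiding the martingale/maximal-inequality machinery and establishing the equality in a single step rather than proving $\le$ and $\ge$ separately; the paper's route produces an explicit integrable envelope, which is a slightly stronger conclusion but is not needed for the stated lemma.
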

\begin{proof}
   Notice that, since $\log\cosh x\leq\frac{x^2}{2}$, we have
   \begin{equation*}
   \prod_{k=1}^{\infty}\EE \left[\exp(a_kX_k)\right]=\exp\left(\sum_{k=1}^\infty\log\cosh a_k\right)\leq\exp\left(\frac{1}{2}\sum_{k=1}^\infty a_k^2\right)<\infty.
   \end{equation*}
   The inequality 
	\begin{equation*}
	\EE \left[\exp\left(\sum_{k=1}^\infty a_kX_k\right)\right]\leq\prod_{k=1}^{\infty}\EE \left[\exp(a_kX_k)\right]
	\end{equation*}	
    follows from Fatou's Lemma. In order to prove the equality, let us define $Y_n=\prod_{k=1}^n e^{a_kX_k}$. We want to use the dominated convergence theorem to show that $\EE [\lim_{n\to\infty}Y_n]=\lim_{n\to\infty}\EE [Y_n]$. Observe that $Y_n$ is a non-negative submartingale with respect to the $\sigma$-algebra $\mathcal{F}_n$ generated by $\left\{X_1,\ldots,X_n\right\}$, indeed
	\begin{equation*}
	\EE \left[Y_{n+1}|\mathcal{F}_n\right]=Y_n\EE \left[e^{a_{n+1}X_{n+1}}\right]=Y_n\cosh a_{n+1}\geq Y_n.
	\end{equation*} 
	Also notice that
	\begin{equation*}
	\EE\left[Y_n^2\right]= \exp\left(\sum_{k=1}^n \log\cosh 2a_k\right)\leq \exp\left(2\sum_{k=1}^n a_k^2\right)\leq \exp\left(2\sum_{k=1}^\infty a_k^2\right)<\infty.
	\end{equation*}
	Using Cauchy-Schwarz and then Doob's inequality, we obtain
    \begin{equation*}
    \EE \left[\max_{1\leq k\leq n}Y_k\right]\leq \EE\left[\max_{1\leq k\leq n}Y_n^2\right]^{1/2}\leq 2\EE \left[Y_n^2\right]^{1/2} \leq 2\exp\left(\sum_{k=1}^\infty a_k^2\right) <\infty.
    \end{equation*}
    Then, by Fatou's Lemma, $\EE\left[\sup_{k\geq 1}Y_k\right]<\infty$. Therefore, the proof is concluded using the dominated convergence theorem.
\end{proof}

In the following we will recall the Hoeffding's inequality. Since in some situations we will need this result for infinitely many summands, which holds in our case, we present the proof to make clear that such generalization is possible. The case of a finite number of summands is contained in the lemma below considering a sequence $\left\{a_k\right\}$ with only a finite number of non-zero terms. 

\begin{lemma}[Hoeffding's inequality]\label{hoeffding}
Let $\{X_k\}_{k\geq 1}$ be a sequence of \textit{i.i.d.} random variables with $\PP (X_1=1)=\PP (X_1=-1)=1/2$, and $\{a_k\}_{k\geq 1}$ a sequence of real numbers such that $\sum_{k=1}^\infty a_k^2<\infty$, then, for any $\lambda>0$,
\begin{equation*}
\PP\left(\sum_{k=1}^\infty a_kX_k\geq \lambda\right)\leq\exp\left(-\frac{\lambda^2}{2\sum_{k=1}^\infty a_k^2}\right).
\end{equation*} 
\end{lemma}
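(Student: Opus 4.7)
The plan is to prove this via the standard Chernoff/exponential Markov bound, with the only novelty being the handling of the infinite sum, which is already prepared by Lemma \ref{infiteproduct}. Setting $S:=\sum_{k=1}^\infty a_kX_k$, which converges almost surely by Kolmogorov's one-series theorem since $\sum a_k^2<\infty$, I would start from the observation that for any $t>0$,
\begin{equation*}
\PP(S\geq \lambda)=\PP(e^{tS}\geq e^{t\lambda})\leq e^{-t\lambda}\,\EE[e^{tS}],
\end{equation*}
by Markov's inequality applied to the nonnegative random variable $e^{tS}$.

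Next I would evaluate the moment generating function $\EE[e^{tS}]$. Since the sequence $(ta_k)_{k\geq 1}$ is again square-summable (with $\sum (ta_k)^2=t^2\sum a_k^2<\infty$), Lemma \ref{infiteproduct} applied to $(ta_k)$ yields
\begin{equation*}
\EE\left[\exp\left(t\sum_{k=1}^\infty a_kX_k\right)\right]=\prod_{k=1}^\infty \EE[e^{ta_kX_k}]=\prod_{k=1}^\infty \cosh(ta_k)\leq \exp\left(\frac{t^2}{2}\sum_{k=1}^\infty a_k^2\right),
\end{equation*}
where the last inequality uses the elementary bound $\log\cosh x\leq x^2/2$ already invoked in the previous lemma. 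Combining the two displays gives, for every $t>0$,
\begin{equation*}
\PP(S\geq \lambda)\leq \exp\left(-t\lambda+\frac{t^2}{2}\sum_{k=1}^\infty a_k^2\right).
\end{equation*}

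Finally, I would optimize the right-hand side by choosing the minimizer $t=\lambda/\sum_{k=1}^\infty a_k^2$, which yields exactly
\begin{equation*}
\PP(S\geq \lambda)\leq \exp\left(-\frac{\lambda^2}{2\sum_{k=1}^\infty a_k^2}\right),
\end{equation*}
as required. There is no real obstacle here: the entire argument is the classical Chernoff trick, and the only subtle point --- exchanging the expectation with the infinite product in the moment generating function --- is precisely what Lemma \ref{infiteproduct} was set up to provide, so once that lemma is available the present proof is essentially mechanical.
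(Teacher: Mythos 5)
Your argument is exactly the paper's: apply Markov's inequality to $e^{tS}$, bound the moment generating function via Lemma \ref{infiteproduct} and $\log\cosh x\leq x^2/2$, then optimize with $t=\lambda/\sum a_k^2$. Correct and identical in approach.
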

\begin{proof}
For $t\in \mathbb R$, by Markov's inequality and Lemma \ref{infiteproduct}, we have
\begin{align*}
\PP\left(\sum_{k=1}^\infty a_kX_k\geq \lambda\right)&=\PP\left(e^{t\sum_{k=1}^\infty a_kX_k}\geq e^{t\lambda}\right)
\leq e^{-t\lambda}\EE \left[e^{\sum_{k=1}^\infty ta_kX_k}\right]\\
&\leq \exp\left(-t\lambda+\frac{t^2}{2}\sum_{k=1}^\infty a_k^2\right).
\end{align*}	
Choosing $t=\lambda/\sum_{k=1}^\infty a_k^2$ we obtain the desired result.
\end{proof}

\section{Proof of the main result}
Let us adopt the notation 
\begin{equation}\label{barF}
\bar{F}(\sigma)=\frac{F(\sigma)}{\sqrt{\EE F(\sigma)^2}}.    
\end{equation}
The proof of Theorem \ref{teorema principal} will be made in four steps that we will describe in the following. 

\textbf{Step 1.} We first prove that, for all $\gamma>0$, there exists a deterministic sequence $\sigma_k\to \frac{1}{2}^+$ such that 
\begin{equation}\label{step1}
\PP \left(\limsup_{k\to\infty}\frac{\bar{F}(\sigma_k)}{\sqrt{2\log\log \EE F(\sigma_k)^2}}\geq 1-\gamma\right)=1.
\end{equation}

\textbf{Step 2.} Let $\epsilon>0$ be fixed and small. Then we prove that for the sequence $\sigma_k=\frac{1}{2}+\frac{1}{2\exp(k^{1-\delta})}$, with $0<\delta<\epsilon /2$, holds
\begin{equation}\label{step2}
\PP \left(\limsup_{k\to\infty}\frac{\bar{F}(\sigma_k)}{\sqrt{2\log\log \EE F(\sigma_k)^2}}\leq \sqrt{1+\epsilon}\right)=1.
\end{equation}

\textbf{Step 3.} Finally we prove that if $\sigma_k$ is as in the \textit{step 2}, then there exists a set $\Omega^*$ with probability $1$, such that for each $\omega\in\Omega^*$, there exists a $k_0=k_0(\omega)$, such that for all $k\geq k_0$, 
\begin{equation}\label{equacao controle dos intervalos}
\max_{\sigma\in[\sigma_k,\sigma_{k-1}]}|F(\sigma)-F(\sigma_k)|\ll \sqrt{\EE F (\sigma_k)^2}.
\end{equation}

\textbf{Step 4.} We conclude from (\ref{step2}) and (\ref{equacao controle dos intervalos}) that for any $\gamma>0$
\begin{equation}\label{equacao cota superior LIL}
\PP \left(\limsup_{\sigma\to1/2^+}\frac{\bar{F}(\sigma)}{\sqrt{2\log\log \EE F(\sigma)^2}}\leq 1+\gamma\right)=1,
\end{equation}
and hence, the Theorem \ref{teorema principal} follows from (\ref{step1}) and (\ref{equacao cota superior LIL}).

Now let us proceed to the execution of the steps described above.
\subsection*{Step 1}\label{subsection step1}
Let us split the normalized Dirichlet series $\bar F(\sigma)$ in three different parts: $F_1$, $F_2$ and $F_3$, where 
\begin{equation*}
 F_i(\sigma)=\frac{1}{\sqrt{\EE F(\sigma)^2}}\sum_{n=N_{i-1}+1}^{N_i}\frac{X_n}{n^\sigma},   
\end{equation*}
with $N_0=0$, $N_3=\infty$. The other parameters, $N_1=N_1(\sigma)$ and $N_2=N_2(\sigma)$, will be determined later in order to:
\begin{equation}\label{inicio}
	\displaystyle\PP \left(\limsup_{k\to\infty}\frac{|F_1(\sigma_k)|}{\sqrt{2\log\log \EE F(\sigma_k)^2}}=0\right)=1,
\end{equation}
\begin{equation}\label{cauda}
\displaystyle\PP \left(\limsup_{k\to\infty}\frac{|F_3(\sigma_k)|}{\sqrt{2\log\log \EE F(\sigma_k)^2}}=0\right)=1,
\end{equation}  
and
\begin{equation}\label{meio}
N_1(\sigma_{k+1})\geq N_2(\sigma_k).
\end{equation} 
The condition \eqref{meio} is required in order to $\{F_2(\sigma_k)\}_{k=1}^\infty$ be a family of independent random variables.  
   
 We use the first Borel-Cantelli lemma to prove \eqref{inicio} and \eqref{cauda} for suitable $\sigma_k$, $N_1(\sigma_k)$ and $N_2(\sigma_k)$. We would like to find sequences $\lambda_k$ and $\eta_k$ such that 
\begin{equation}\label{inicio1}
\sum_{k=1}^\infty\PP(|F_1(\sigma_k)|\geq\lambda_k)<\infty, \text{~~~~with~~~~} \frac{\lambda_k}{\sqrt{\log\log\EE F(\sigma_k)^2}}\to 0, 
\end{equation} 
and 
\begin{equation}\label{cauda1}
\sum_{k=1}^\infty\PP(|F_3(\sigma_k)|\geq\eta_k)<\infty, \text{~~~~with~~~~}
\frac{\eta_k}{\sqrt{\log\log\EE F(\sigma_k)^2}}\to 0.
\end{equation}
 
Using Lemma \ref{hoeffding}, we obtain the bounds
\begin{equation*}
\PP(|F_1(\sigma_k)|\geq\lambda_k)=2\PP(F_1(\sigma_k)\geq \lambda_k)\leq 2\exp\left(-\frac{\lambda_k^2\EE F(\sigma_k)^2}{2\sum_{n=1}^{N_1}n^{-2\sigma_k}}\right),
\end{equation*}
and
\begin{equation*}
\PP(|F_3(\sigma_k)|\geq\eta_k)=2\PP(F_3(\sigma_k)\geq \eta_k)\leq 2\exp\left(-\frac{\eta_k^2\EE F(\sigma_k)^2}{2\sum_{n=N_2+1}^{\infty}n^{-2\sigma_k}}\right).
\end{equation*}

Then, \eqref{inicio1} and \eqref{cauda1} will hold if we choose the sequences $\lambda_k=\sqrt{2(1+\epsilon)\alpha_k}$ and $\eta_k=\sqrt{2(1+\epsilon)\beta_k}$ and require the conditions
\begin{equation}\label{inicio2}
\frac{1}{\alpha_k\EE F(\sigma_k)^2}\sum_{n=1}^{N_1}\frac{1}{n^{2\sigma_k}}\leq\frac{1}{\log k},\text{~~~~with~~~~}\frac{\alpha_k}{\log\log\EE F(\sigma_k)^2}\to 0,
\end{equation}
and
\begin{equation}\label{cauda2}
\frac{1}{\beta_k\EE F(\sigma_k)^2}\sum_{n=N_2+1}^{\infty}\frac{1}{n^{2\sigma_k}}\leq\frac{1}{\log k},\text{~~~~with~~~~}\frac{\beta_k}{\log\log\EE F(\sigma_k)^2}\to 0.
\end{equation}

Let us consider, for $\delta>0$, the sequence $\sigma_k\to\frac{1}{2}^+$ to be  \begin{equation}\label{sigmak}
\sigma_k=\frac{1}{2}+\frac{1}{2\exp\left(k^{1+\delta}\right)}.
\end{equation}
Then, using Lemma \ref{lemma riemann sums}, the conditions \eqref{inicio2} and \eqref{cauda2} will hold if we require
\begin{equation}\label{inicio3}
N_1(\sigma_k)\leq\left(1+\exp(-k^{1+\delta})-\frac{\alpha_k}{\log k}\right)^{-\exp(k^{1+\delta})},\text{~~~~with~~~~}\frac{\alpha_k}{\log k}\to 0,
\end{equation} 
and
\begin{equation}\label{cauda3}N_2(\sigma_k)\geq \left(\frac{\log k}{\beta_k}\right)^{\exp(k^{1+\delta})},\text{~~~~with~~~~}\frac{\beta_k}{\log k}\to 0.
\end{equation}

Let us choose $N_1$ and $N_2$ assuming equality in \eqref{inicio3} and \eqref{cauda3}, and $\alpha_k=\sqrt{\log k}$. Recall that we are also looking for $N_1$ and $N_2$ satisfying \eqref{meio}, and, for that, we should have
\begin{equation*}
\beta_k\geq \log k\left(1+\exp(-(k+1)^{1+\delta})-(\log (k+1))^{-1/2}\right)^{\exp\left((k+1)^{1+\delta}-k^{1+\delta}\right)}.    
\end{equation*}
Such choice of $\beta_k$ will be possible if
\begin{equation*}
    \lim_{k\to\infty}\left(1+\exp(-(k+1)^{1+\delta})-(\log (k+1))^{-1/2}\right)^{\exp\left((k+1)^{1+\delta}-k^{1+\delta}\right)}=0,
\end{equation*}
which can be checked to be true by using L'Hôpital rule. For this limit, the necessity of the condition $\delta>0$ is crucial.

We have just found sequences $\sigma_k$, $N_1(\sigma_k)$ and $N_2(\sigma_k)$ satisfying \eqref{inicio}, \eqref{cauda} and \eqref{meio}. To complete the proof of \eqref{step1} we need to show that 
\begin{equation*}
    \PP\left(\limsup_{k\to\infty}F_2(\sigma_k)\geq(1-\gamma)\sqrt{2\log\log\EE F(\sigma_k)^2}\right)=1.
\end{equation*}
Since $N_1(\sigma_{k+1})\geq N_2(\sigma_k)$, we have the required independence needed for the second Borel-Cantelli lemma. Therefore, we must prove that the series 
\begin{equation}\label{diverge}
\sum_{k=1}^\infty\PP\left(F_2(\sigma_k)\geq(1-\gamma)\sqrt{2\log\log\EE F(\sigma_k)^2})\right)
\end{equation}
diverges.

The next paragraphs will be devoted to find a lower estimate for the probability in \eqref{diverge}. Let us recall from \eqref{barF} that $\bar{F}(\sigma)$ denotes  the normalized version of $F(\sigma)$. Since the terms $F_1(\sigma_k)$ and $F_3(\sigma_k)$ are irrelevant owing to the $(2\log\log\EE F(\sigma_k)^2)^{-1/2}$ term, we will use a lower bound as the one stated in the following:

\begin{lemma}\label{lowerbound}
	Let $f(\sigma)$ be a function that goes to $+\infty$ as $\sigma\to\frac{1}{2}^+$ and satisfies the condition
	\begin{equation}\label{condition_f}
	\lim_{\sigma\to\frac{1}{2}^+}\frac{f(\sigma)}{\sqrt{\EE F(\sigma)^2}}=0.
	\end{equation}
	Then, for all $\delta,\lambda,\epsilon>0$, there exists $\delta_1>0$ such that for $\sigma-\frac{1}{2}\leq\delta_1$, we have
\begin{equation*}
    	\PP (\bar F(\sigma)\geq \delta f(\sigma))\geq \left(\frac{1}{2}-\epsilon\right)\exp\left(-\frac{1}{2}\delta^2(1+\lambda)^2f(\sigma)^2\right).
\end{equation*}
	\end{lemma}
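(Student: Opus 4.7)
This is a Cramér-type large-deviation lower bound, and I would prove it by exponential tilting. Write $Z := \bar{F}(\sigma) = \sum_{n \ge 1} a_n X_n$ with $a_n = n^{-\sigma}/\sqrt{\EE F(\sigma)^2}$, so that $\sum_n a_n^2 = 1$, $\max_n a_n = 1/\sqrt{\EE F(\sigma)^2}$, and the hypothesis \eqref{condition_f} says precisely that $y\cdot\max_n a_n \to 0$, where $y := \delta f(\sigma)$. For $t>0$ introduce the tilted law $\tilde{\PP}$ by $d\tilde{\PP}/d\PP = e^{tZ}/M(t)$, with $M(t) = \EE e^{tZ} = \prod_n \cosh(ta_n)$ by Lemma \ref{infiteproduct}; under $\tilde{\PP}$ the $X_n$ remain independent, with $\tilde{\EE}[X_n] = \tanh(ta_n)$ and variance $\mathrm{sech}^2(ta_n)\le 1$, so in particular $\tilde{\mathrm{Var}}(Z) \le \sum_n a_n^2 = 1$.

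Fix $\lambda' \in (0,\lambda)$ (its exact value chosen at the end, depending only on $\lambda$) and take $t = (1+\lambda')y$. The change-of-measure identity gives
\[
\PP(Z \ge y)\;\ge\;\PP(y \le Z \le t)\;=\;M(t)\,\tilde{\EE}\!\left[e^{-tZ}\mathbf{1}_{\{y\le Z\le t\}}\right]\;\ge\;M(t)\,e^{-t^2}\,\tilde{\PP}(y \le Z \le t).
\]
Since $|ta_n| \le t\max_n a_n \to 0$ uniformly in $n$, the Taylor expansion $\log\cosh x = x^2/2 - x^4/12 + O(x^6)$ applies to every term, and together with $\sum_n a_n^4 \le (\max_n a_n)^2 \sum_n a_n^2 = 1/\EE F(\sigma)^2$ it yields
\[
\log M(t)\;\ge\;\frac{t^2}{2} - C\,\frac{t^4}{\EE F(\sigma)^2}\;=\;\frac{t^2}{2}\bigl(1-o(1)\bigr),
\]
so that $M(t)\,e^{-t^2} \ge \exp\bigl(-\tfrac{t^2}{2}(1+o(1))\bigr)$. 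Since $t^2 = (1+\lambda')^2\delta^2 f(\sigma)^2$ and $(1+\lambda')^2 < (1+\lambda)^2$, choosing $\delta_1$ small enough absorbs the $(1+o(1))$ factor into the target exponent $\tfrac{1}{2}\delta^2(1+\lambda)^2 f(\sigma)^2$.

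It remains to show $\tilde{\PP}(y \le Z \le t) \ge 1/2 - \epsilon$ for $\sigma$ close enough to $1/2$. The expansion $\tanh x = x - x^3/3 + O(x^5)$ gives $\tilde{\EE}[Z] = \sum_n a_n\tanh(ta_n) = t\bigl(1 - O(t^2/\EE F(\sigma)^2)\bigr)$, so eventually $\tilde{\EE}[Z] \in [y,t]$ at distance at least $\lambda'y/2$ from $y$. Chebyshev's inequality with $\tilde{\mathrm{Var}}(Z) \le 1$ then bounds $\tilde{\PP}(Z < y) \le 4/(\lambda'y)^2 \to 0$, while the Lindeberg CLT applied to $Z - \tilde{\EE}[Z]$ under $\tilde{\PP}$---whose Lindeberg condition is trivial because the centred summands $a_n(X_n - \tanh(ta_n))$ are uniformly bounded by $2\max_n a_n \to 0$---gives $\tilde{\PP}(Z \le \tilde{\EE}[Z]) \to 1/2$, and hence $\tilde{\PP}(Z \le t) \ge 1/2 - \epsilon/2$ eventually. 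Combining the two estimates proves the lemma.

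The only real technical point is bookkeeping: the remainder in $\log M(t)$, the gap between $\tilde{\EE}[Z]$ and $t$, and the Gaussian approximation under $\tilde{\PP}$ are all controlled by the single small parameter $f(\sigma)/\sqrt{\EE F(\sigma)^2}$ from \eqref{condition_f}. The cleanest order is therefore to fix $\lambda'$ up front in terms of $\lambda$ and $\epsilon$, and then pick $\delta_1$ at the end so that all three $o(1)$ quantities are simultaneously negligible.
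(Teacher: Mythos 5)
Your proof is correct and follows the same underlying strategy as the paper's --- exponential tilting plus a Gaussian approximation for the tilted law --- but differs in two worthwhile technical respects. First, you pick the tilting parameter \emph{explicitly} as $t=(1+\lambda')\delta f(\sigma)$, whereas the paper defines $t_0$ \emph{implicitly} as the solution of $h(t_0)=\delta f(\sigma)$ (so that $\widetilde\EE_{t_0}[\bar F(\sigma)]=\delta f(\sigma)$ exactly) and then needs a separate auxiliary lemma, Lemma~\ref{properties_t0}, just to locate $t_0$ in the interval $[\delta f(\sigma),\delta(1+\lambda)f(\sigma)]$ and to control $\log\cosh$-sums at $t_0$. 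Your explicit choice sidesteps that lemma entirely: the only thing you need is that the tilted mean $\sum_n a_n\tanh(ta_n)$ lies in $(y,t)$ and is bounded away from $y$, which follows from a one-line Taylor estimate and \eqref{condition_f}. Second, for the verification that $\widetilde\PP_{t_0}$ puts mass at least $\tfrac12-\epsilon$ on the target window, the paper proves convergence of the moment generating function $\widetilde\EE_{t_0}[e^{t\bar F(\sigma)}]e^{-t\delta f(\sigma)}\to e^{t^2/2}$, while you split the window into two one-sided events and handle them asymmetrically: Chebyshev (with $\widetilde{\mathrm{Var}}(Z)\le 1$) for $\widetilde\PP(Z<y)\to 0$, and a Lindeberg triangular-array CLT for $\widetilde\PP(Z\le t)\to \tfrac12$. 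Both routes are standard in Cram\'er-type lower bounds; yours is a bit more economical because the tilted mean does not have to sit exactly at the left endpoint of the window. Only two small points of rigor are worth tightening if you write this up: replace the Taylor ``$O(x^6)$'' argument for $\log\cosh$ by the uniform inequality $\tfrac{x^2}{2}-\tfrac{x^4}{8}\le\log\cosh x\le\tfrac{x^2}{2}$ (as in \eqref{logcosh}), valid for all $x$, so no uniformity issue arises; and note explicitly that $\widetilde{\mathrm{Var}}(Z)\ge 1-t^2\sum_n a_n^4\to 1$, so the normalization in the Lindeberg CLT is nondegenerate.
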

 The bound in Lemma \ref{lowerbound} will be used for the law of the iterated logarithm 
 with the function $f(\sigma)=\sqrt{2\log\log \EE F(\sigma)^2}$.

\begin{proof} For all $\bar\lambda>0$, let us consider the event $A=A(\sigma,\delta,\bar\lambda)$ in which $\bar F(\sigma)\in[\delta f(\sigma), \delta (1+\bar\lambda)f(\sigma)]$. Then 
	\begin{equation*}
	    \PP(\bar F(\sigma)\geq\delta f(\sigma))\geq\PP(A).
	\end{equation*}
	For each $n$, define the probability measure
	\begin{equation*}
	    \widetilde \PP_{t_0}(dX_n)=\frac{\exp\left(\frac{t_0X_n}{n^\sigma\sqrt{\EE F(\sigma)^2}}\right)}{\cosh\left(\frac{t_0}{n^\sigma\sqrt{\EE F(\sigma)^2}}\right)}\PP (dX_n),
	\end{equation*}
	where $t_0>0$ will be chosen later. The introduction of this Radon-Nikodym factor is a classical tool in the proof of the lower bound in large deviation theory. Let $\widetilde\PP(dX)$ be the probability measure consisted in the product measure of each  $\widetilde\PP(dX_n)$, $n\geq 1$.
	
	 We have
	\begin{align*}
	\PP &(\bar F(\sigma)\geq \delta f(\sigma))\geq \int_A\PP(dX)\\
	&=\exp\left(\sum_{n=1}^\infty\log\cosh\frac{t_0}{n^\sigma\sqrt{\EE F(\sigma)^2}}\right)\int_A\exp\left(-\sum_{n=1}^\infty\frac{t_0X_n}{n^\sigma\sqrt{\EE F(\sigma)^2}}\right)\widetilde\PP_{t_0} (dX).
	\end{align*}
	
	Since, on the event $A$, 
	\begin{equation*}
	    -\sum_{n=1}^\infty\frac{t_0X_n}{n^\sigma\sqrt{\EE F(\sigma)^2}}\geq -t_0\delta(1+\bar\lambda)f(\sigma),
	\end{equation*} 
	we obtain
	\begin{equation}\label{cota}
	\PP (\bar F(\sigma)\geq \delta f(\sigma))\geq\exp\left(-t_0\delta(1+\bar\lambda)f(\sigma)+\sum_{n=1}^\infty\log\cosh\frac{t_0}{n^\sigma\sqrt{\EE F(\sigma)^2}}\right)\widetilde\PP _{t_0}(A).
	\end{equation}
	
	Let us denote by $h(t)$ the function
	\begin{equation}\label{h(t)}
	h(t)=\sum_{n=1}^\infty\tanh\left(\frac{t}{n^\sigma\sqrt{\EE F(\sigma)^2}}\right)\frac{1}{n^\sigma\sqrt{\EE F(\sigma)^2}}.
	\end{equation}
	Observe $h(t)$ is an increasing function of $t$. We chose $t_0$ as the (unique) solution of the equation
	\begin{equation}\label{t0}
	\delta f(\sigma)=h(t_0).
	\end{equation}
	
	The following lemma states some properties of $t_0$. The proof will be postponed to the end of this subsection.
	\begin{lemma}\label{properties_t0}
		If $t_0$ is the solution of \eqref{t0}, then for any $\lambda>0$, there exists a $\delta_1>0$ such that, if $\sigma-\frac{1}{2}\leq\delta_1$, we have
		\begin{equation}\label{boundt01}
		\delta f(\sigma)\leq t_0\leq \delta(1+\lambda)f(\sigma).
		\end{equation}
		Moreover, 
		\begin{equation}\label{boundt02}
		-t_0\delta(1+\lambda)f(\sigma)+\sum_{n=1}^\infty\log\cosh\frac{t_0}{n^\sigma\sqrt{\EE F(\sigma)^2}}\geq -\frac{1}{2}\delta^2(1+2\lambda)^2f(\sigma)^2.
		\end{equation}
	\end{lemma}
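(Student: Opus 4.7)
The plan is to Taylor-expand $\tanh$ and $\log\cosh$ at $0$ and exploit that the arguments $t_0/(n^\sigma\sqrt{\EE F(\sigma)^2})$ are uniformly small in $n$ once $\sigma$ is close to $1/2$. The driving analytic fact is that, by hypothesis \eqref{condition_f}, $f(\sigma)^2/\EE F(\sigma)^2\to 0$ as $\sigma\to 1/2^+$; since $\EE F(\sigma)^2=\zeta(2\sigma)\to\infty$ while $\zeta(4\sigma)\to\zeta(2)$ stays bounded, both ratios $f(\sigma)^2/\zeta(2\sigma)$ and $\zeta(4\sigma)/\zeta(2\sigma)^2$ tend to $0$, and it is these quantities that will absorb all error terms.

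First I would establish \eqref{boundt01}. The lower bound $t_0\geq\delta f(\sigma)$ is immediate from the termwise inequality $\tanh(x)\leq x$, which, summed in the definition of $h$ together with $\sum_n n^{-2\sigma}=\zeta(2\sigma)$, yields $h(t)\leq t$ for every $t\geq 0$; evaluating at $t_0$ and using $h(t_0)=\delta f(\sigma)$ concludes. For the upper bound I would use $\tanh(x)\geq x-x^3/3$ for $x\geq 0$ (easily checked, since the derivative of $\tanh x-x+x^3/3$ equals $x^2-\tanh^2 x\geq 0$), summed termwise to yield
\begin{equation*}
h(t)\;\geq\; t-\frac{t^3\,\zeta(4\sigma)}{3\,\zeta(2\sigma)^2}.
\end{equation*}
Evaluated at $t=\delta(1+\lambda)f(\sigma)$, the cubic correction is of order $f(\sigma)^3/\zeta(2\sigma)^2$, and hence $o(\lambda\delta f(\sigma))$ as $\sigma\to 1/2^+$. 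Thus $h(\delta(1+\lambda)f(\sigma))\geq \delta f(\sigma)=h(t_0)$ for $\sigma-1/2$ small enough, and the monotonicity of $h$ gives $t_0\leq\delta(1+\lambda)f(\sigma)$.

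Next I would refine the expansion one order further to obtain \eqref{boundt02}. Integrating the inequality $\tanh(x)\geq x-x^3/3$ yields $\log\cosh(x)\geq x^2/2-x^4/12$, and summed termwise this gives
\begin{equation*}
\sum_{n=1}^{\infty}\log\cosh\frac{t_0}{n^\sigma\sqrt{\EE F(\sigma)^2}}\;\geq\;\frac{t_0^{\,2}}{2}-\frac{t_0^{\,4}\,\zeta(4\sigma)}{12\,\zeta(2\sigma)^2}.
\end{equation*}
Plugging in the already-proved lower bound $t_0\geq\delta f(\sigma)$ in the quadratic main term and the upper bound $t_0\leq\delta(1+\lambda)f(\sigma)$ in the quartic error and in the linear term (via $-t_0\delta(1+\lambda)f(\sigma)\geq -\delta^2(1+\lambda)^2 f(\sigma)^2$), the left-hand side of \eqref{boundt02} is bounded below by
\begin{equation*}
-\delta^2 f(\sigma)^2\bigl(\tfrac{1}{2}+2\lambda+\lambda^2\bigr)-\frac{\delta^4(1+\lambda)^4 f(\sigma)^4\,\zeta(4\sigma)}{12\,\zeta(2\sigma)^2}.
\end{equation*}
Since the target right-hand side is $-\tfrac{1}{2}\delta^2(1+2\lambda)^2 f(\sigma)^2=-\delta^2 f(\sigma)^2(\tfrac{1}{2}+2\lambda+2\lambda^2)$, there is an extra margin of size $\lambda^2\delta^2 f(\sigma)^2$, into which the quartic error fits because $f(\sigma)^2\zeta(4\sigma)/\zeta(2\sigma)^2\to 0$.

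The only real obstacle is bookkeeping: one must choose a single $\delta_1>0$ small enough so that $\sigma-1/2\leq\delta_1$ simultaneously validates the smallness estimate used in the proof of \eqref{boundt01} and the $\lambda^2$-absorption above. All these conditions reduce to \eqref{condition_f} together with the uniform boundedness of $\zeta(4\sigma)$ near $\sigma=1/2$, so the choice of $\delta_1$ is routine.
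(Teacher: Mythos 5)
Your proof is correct and follows essentially the same path as the paper's: termwise Taylor bounds for $\tanh$ and $\log\cosh$, with the error terms absorbed via condition \eqref{condition_f} and the boundedness of $\sum_n n^{-4\sigma}$ near $\sigma=1/2$. The only cosmetic differences are that you use the sharper coefficients $x-x^3/3$ and $x^2/2-x^4/12$ (the paper uses $x-x^3/2$ and $x^2/2-x^4/8$) and you obtain the upper bound $t_0\leq\delta(1+\lambda)f(\sigma)$ by directly comparing $h(\delta(1+\lambda)f(\sigma))$ with $h(t_0)$ via monotonicity, whereas the paper first bootstraps a rough bound $t_0\leq\tfrac32\delta f(\sigma)$ from the maximum of the cubic $g$ and then refines it — both are valid and lead to the same conclusion.
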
 

Using Lemma \ref{properties_t0} in \eqref{cota} with $\bar\lambda=\lambda/2$, in order to conclude the proof of Lemma \ref{lowerbound} we only need to show that for all $\epsilon>0$, exists $\delta_1>0$ such that for $\sigma-\frac{1}{2}\leq\delta_1$, we have
\begin{equation*}
    \widetilde\PP _{t_0}(\bar F(\sigma)\in [\delta f(\sigma),\delta (1+\bar\lambda)f(\sigma)])\geq \frac{1}{2}-\epsilon.
\end{equation*}
It is sufficient to prove
\begin{equation}\label{Ptilde1}
1-\widetilde\PP_{t_0}(\bar F(\sigma)<\delta f(\sigma))\geq \frac{1}{2}-\frac{\epsilon}{2}
\end{equation}
and
\begin{equation}\label{Ptilde2}
\widetilde\PP_{t_0}(\bar F(\sigma)>\delta(1+\bar\lambda)f(\sigma))\leq\frac{\epsilon}{2}.
\end{equation}

We will show that $\bar F(\sigma)-\delta f(\sigma)$ converge in law, under $\widetilde\PP_{t_0}$ to a standard Gaussian random variable, as $\sigma\to\frac{1}{2}^+$. For that, we will prove the convergence of the corresponding moment generating functions.

Observing that $M_n=\exp\left(t\sum_{k=1}^n\frac{X_k}{k^\sigma\sqrt{\EE F(\sigma)^2}}\right)\Big/b_n$, where $b_n=\prod_{k=1}^n\frac{\cosh\frac{t+t_0}{k^\sigma\sqrt{\EE F(\sigma)^2}}}{\cosh\frac{t_0}{k^\sigma\sqrt{\EE F(\sigma)^2}}}$, is a martingale under $\widetilde{\PP}_{t_0}$, with respect to the $\sigma$-algebra $\mathcal{F}_n$ generated by $\left\{X_1,\ldots,X_n\right\}$, we can reproduce Lemma \ref{infiteproduct} for $\widetilde{\EE}_{t_0}$. Then  
\begin{equation}\label{expectation_tilde}
\widetilde{\EE}_{t_0}\left[e^{t\bar F(\sigma)}\right]=\frac{\exp\left(\sum_{n=1}^\infty\log\cosh\frac{t+t_0}{n^\sigma\sqrt{\EE F(\sigma)^2}}\right)}{\exp\left(\sum_{n=1}^\infty\log\cosh\frac{t_0}{n^\sigma\sqrt{\EE F(\sigma)^2}}\right)}=\frac{\EE \left[e^{(t+t_0)\bar F(\sigma)}\right]}{\EE \left[e^{t_0\bar F(\sigma)}\right]}.
\end{equation}

Using the estimates
 \begin{equation}\label{logcosh}
 \frac{x^2}{2}-\frac{x^4}{8}\leq\log\cosh x\leq\frac{x^2}{2}, \text{ for all } x\in\mathbb R,
 \end{equation}  
 \begin{equation}\label{somaquadratica}
 \sum_{n=1}^\infty\frac{1}{n^{4\sigma}}\leq\sum_{n=1}^\infty\frac{1}{n^2}=\frac{\pi^2}{6}
 \end{equation}
 and Lemma \ref{infiteproduct}, we have 
\begin{equation}\label{moment function}
\exp\left(\frac{t^2}{2}-\frac{t^4\pi^2}{48(\EE F(\sigma)^2)^2}\right)\leq\EE\left[e^{t\bar F(\sigma)}\right]\leq\exp\left(\frac{t^2}{2}\right).
\end{equation}

Note that, in particular, \eqref{moment function} gives us $\lim_{\sigma\to\frac{1}{2}^+}\EE\left[e^{t\bar F(\sigma)}\right]=e^{\frac{t^2}{2}}$,
which yields an alternative proof of the Central Limit Theorem for $\bar{F}(\sigma)$ that was proved in \cite{aymonerealzerosofdirichletseries} using the convergence of characteristic functions.
 
Using \eqref{moment function} in \eqref{expectation_tilde} we provide the following upper and lower bounds:
\begin{equation*}
    \widetilde\EE_{t_0}\left[e^{t\bar F(\sigma)}\right]e^{-t\delta f(\sigma)}\leq\exp\left(\frac{t^2}{2}+t(t_0-\delta f(\sigma))+\frac{t_0^4\pi^2}{48(\EE F(\sigma)^2)^2}\right)
\end{equation*}
and
\begin{equation*}
    \widetilde\EE_{t_0}\left[e^{t\bar F(\sigma)}\right]e^{-t\delta f(\sigma)}\geq\exp\left(\frac{t^2}{2}+t(t_0-\delta f(\sigma))-\frac{(t+t_0)^4\pi^2}{48(\EE F(\sigma)^2)^2}\right).
\end{equation*}

Thus, by \eqref{boundt01} and the condition \eqref{condition_f}, we obtain
\begin{equation*}
    \lim_{\sigma\to\frac{1}{2}^+}\widetilde\EE_{t_0}\left[e^{t\bar F(\sigma)}\right]\cdot e^{-t\delta f(\sigma)}=e^{\frac{t^2}{2}},
\end{equation*}
which gives us the convergence (under the law of $\tilde\PP_{t_0}$) to the standard Gaussian variable, therefore
\begin{equation*}
    \lim_{\sigma\to\frac{1}{2}^+}\widetilde\PP_{t_0}(\bar F(\sigma)-\delta f(\sigma)>0)=\frac{1}{2},
\end{equation*}
which proves \eqref{Ptilde1}.

It remains to prove \eqref{Ptilde2}. Since $f(\sigma)$ explodes as $\sigma\to\frac{1}{2}^+$, for any fixed $a>0$, we have
\begin{align*}
\limsup_{\sigma\to\frac{1}{2}^+}&~\widetilde\PP_{t_0}(\bar F(\sigma)>\delta(1+\bar\lambda)f(\sigma))\\
&=\limsup_{\sigma\to\frac{1}{2}^+}\widetilde\PP_{t_0}(\bar F(\sigma)-\delta f(\sigma)>\delta\bar\lambda f(\sigma))\\
&\leq \limsup_{\sigma\to\frac{1}{2}^+}\widetilde\PP_{t_0}(\bar F(\sigma)-\delta f(\sigma)>a)\\
&=\int_a^\infty \frac{e^{-x^2/2}}{\sqrt{2\pi}}dx, 
\end{align*} 
which goes to zero as $a\to\infty$. 

This shows that $\widetilde\PP_{t_0}(\bar F(\sigma)>\delta(1+\bar\lambda)f(\sigma))$ can be arbitrarily small as $\sigma\to\frac{1}{2}^+$, which gives us \eqref{Ptilde2}. And this completes the proof of Lemma \ref{lowerbound}.
\end{proof}

\begin{proof}[Proof of Lemma \ref{properties_t0}]
Using that $\tanh x\leq x$, for $x\geq 0$, we obtain
\begin{equation*}
\delta f(\sigma)=h(t_0)\leq \frac{t_0}{\EE F(\sigma)^2}\sum_{n=1}^\infty\frac{1}{n^{2\sigma}}=t_0,    
\end{equation*}
which proves the lower bound of $t_0$ stated in \eqref{boundt01}.

For the upper bound we use that $\tanh x\geq x-\frac{x^3}{2}$ and \eqref{somaquadratica}. We obtain
\begin{equation*}
\delta f(\sigma)\geq t_0-t_0^3\frac{\pi^2}{12(\EE F(\sigma)^2)^2}=:g(t_0).
\end{equation*}
The cubic function $g(t)$ hits its maximum at $\hat t=\frac{2}{\pi}\EE F(\sigma)^2$, and $g(\hat t)=\frac{2}{3}\hat t$.

Now, since $f$ satisfies \eqref{condition_f}, we have, for $\sigma$ close enough to $\frac{1}{2}$, that 
\begin{equation*}
    \delta f(\sigma)\leq \epsilon\sqrt{\EE F(\sigma)^2}<\frac{2}{3}\hat t.
\end{equation*}
Then, since the increasing function $h$ satisfies $g(t)\leq h(t)\leq t$, the solution $t_0$ of \eqref{t0} must satisfies $t_0<\hat t$, which implies $g(t_0)\geq \frac{2}{3}t_0$.

This implies $t_0\leq \frac{3}{2}g(t_0)\leq\frac{3}{2}\delta f(\sigma)$. Notice that this already gives us an upper bound for $t_0$, however, this bound can be improved. Indeed,
\begin{equation*}
    t_0\leq \delta f(\sigma)+t_0^3\frac{\pi^2}{12(\EE F(\sigma)^2)^2}\leq\delta f(\sigma)\left(1+\frac{27\delta^2\pi^2}{96}\frac{f(\sigma)^2}{(\EE F(\sigma)^2)^2}\right).
\end{equation*}
Again, by \eqref{condition_f}, there exists $\delta_1>0$ such that, if $\sigma-\frac{1}{2}\leq \delta_1$, we have the upper bound stated in \eqref{boundt01}.

Now we will prove \eqref{boundt02}. Using \eqref{logcosh}, \eqref{somaquadratica} and \eqref{boundt01}, we obtain
\begin{align}
-t_0\delta(1+\lambda)&f(\sigma)+\sum_{n=1}^\infty\log\cosh\frac{t_0}{n^\sigma\sqrt{\EE F(\sigma)^2}}\nonumber\\
&\geq -t_0\delta(1+\lambda)f(\sigma)+\frac{t_0^2}{2}-\frac{\pi^2}{48}\frac{t_0^4}{(\EE F(\sigma)^2)^2}\nonumber\\
&\geq -\frac{1}{2}\delta^2f(\sigma)^2\left(2(1+\lambda)^2-1+\frac{\pi^2\delta^2(1+\lambda)^4}{24}\frac{f(\sigma)^2}{(\EE F(\sigma)^2)^2}\right).\label{revisao1}
\end{align}
Using \eqref{condition_f} again, we have that, for $\sigma$ close to $\frac{1}{2}^+$, 
\begin{equation*}
\left(\frac{f(\sigma)}{\EE F(\sigma)^2}\right)^2\leq \frac{24}{\pi^2\delta^2(1+\lambda)^4}\cdot 2\lambda^2.
\end{equation*}
Then, the expression in \eqref{revisao1} is bounded below by $-\frac{1}{2}\delta^2f(\sigma)^2(1+2\lambda)^2$, which proves \eqref{boundt02}.
\end{proof}

Now, by Lemma \ref{lowerbound}, and considering $\sigma_k$ as in \eqref{sigmak}, we have that, if $k$ is big enough, 
\begin{equation*}
    \PP\left(F_2(\sigma_k)\geq(1-\gamma)\sqrt{2\log\log \EE F(\sigma_k)^2}\right)\geq\left(\frac{1}{2}-\epsilon\right)\frac{1}{k^{(1-\gamma)^2(1+\lambda)^2(1+\delta)}}.
\end{equation*}
Therefore, for any $\gamma>0$, a suitable choice of the parameters $\lambda,\delta$ gives us the divergence of the series \eqref{diverge}. Thus, the proof of \textit{step 1} is completed.

\subsection*{Step 2}
\begin{lemma}\label{lemma controle superior sobre subsequencia} Let $\epsilon>0$ be small and $\delta=\epsilon/2$. Let $\sigma_k=\frac{1}{2}+\frac{1}{2\exp(k^{1-\delta})}$. Then it \textit{a.s.} holds that 
	\begin{equation*}
	\limsup_{k\to\infty} \frac{\bar F(\sigma_k)}{\sqrt{2\log\log \EE F(\sigma_k)^2}}\leq \sqrt{1+\epsilon}.
	\end{equation*}
\end{lemma}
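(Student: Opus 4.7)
The plan is a direct Borel--Cantelli argument using Hoeffding's inequality. Writing $\bar F(\sigma_k) = \sum_{n\geq 1} a_n X_n$ with $a_n = n^{-\sigma_k}/\sqrt{\EE F(\sigma_k)^2}$, one has $\sum_n a_n^2 = 1$, so Lemma \ref{hoeffding} gives the clean Gaussian tail
\begin{equation*}
\PP\bigl(\bar F(\sigma_k) \geq \lambda\bigr) \leq \exp(-\lambda^2/2)
\end{equation*}
for every $\lambda>0$. I would set the threshold $\lambda_k := \sqrt{2(1+\epsilon)\log\log \EE F(\sigma_k)^2}$, obtaining
\begin{equation*}
\PP\bigl(\bar F(\sigma_k) \geq \lambda_k\bigr) \leq \bigl(\log \EE F(\sigma_k)^2\bigr)^{-(1+\epsilon)}.
\end{equation*}

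Next I would estimate the denominator using Lemma \ref{lemma riemann sums}. Since $\EE F(\sigma_k)^2 = \sum_n n^{-2\sigma_k}$, that lemma (applied with $\sigma=2\sigma_k$) gives $\EE F(\sigma_k)^2 \asymp (2\sigma_k-1)^{-1} = \exp(k^{1-\delta})$, whence
\begin{equation*}
\log \EE F(\sigma_k)^2 \sim k^{1-\delta}.
\end{equation*}
The Borel--Cantelli tail sum is therefore comparable to $\sum_{k\geq 1} k^{-(1-\delta)(1+\epsilon)}$, which converges precisely when $(1-\delta)(1+\epsilon) > 1$. With the specified choice $\delta = \epsilon/2$ one computes
\begin{equation*}
(1-\epsilon/2)(1+\epsilon) = 1 + \tfrac{\epsilon}{2} - \tfrac{\epsilon^2}{2} > 1
\end{equation*}
for every $\epsilon \in (0,1)$, so the series is summable.

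The first Borel--Cantelli lemma then yields that, almost surely, $\bar F(\sigma_k) < \lambda_k$ for all $k$ large enough, which is exactly the claimed bound
\begin{equation*}
\limsup_{k\to\infty} \frac{\bar F(\sigma_k)}{\sqrt{2\log\log \EE F(\sigma_k)^2}} \leq \sqrt{1+\epsilon}.
\end{equation*}
There is no real obstacle here: the only delicate point is the exponent arithmetic, where the requirement $\delta < \epsilon$ is essential. The regime $\delta = \epsilon/2$ used in Step 2 is chosen precisely so that the mild slowdown of $\log\EE F(\sigma_k)^2$ (only of order $k^{1-\delta}$ rather than $k$) is overcome by the extra factor $(1+\epsilon)$ inside the Hoeffding bound. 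Note that this is the opposite regime to Step 1, where $\delta > 0$ was needed with $\sigma_k = 1/2 + (2\exp(k^{1+\delta}))^{-1}$ so that $\log \EE F(\sigma_k)^2$ grows fast enough for the independence construction, whereas here we need it to grow slowly enough for the upper tail to be summable.
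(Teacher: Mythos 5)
Your proof is correct and follows essentially the same route as the paper's: apply Hoeffding's inequality (Lemma \ref{hoeffding}) with the normalized coefficients to get the Gaussian tail, use Lemma \ref{lemma riemann sums} to see that $\log\EE F(\sigma_k)^2$ is of order $k^{1-\delta}$, check that $(1-\delta)(1+\epsilon)>1$ when $\delta=\epsilon/2$, and conclude by the first Borel--Cantelli lemma. The only cosmetic difference is that the paper phrases the exponent bookkeeping via the auxiliary $\gamma=\epsilon/2-\epsilon^2/2$, while you write the product $(1-\delta)(1+\epsilon)$ directly; the content is identical.
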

\begin{proof}
	We have, by the Hoeffding inequality that 
	\begin{align*}
	\PP\left(\bar F(\sigma_k)\geq \sqrt{2(1+\epsilon)\log\log \EE F(\sigma_k)^2}\right)\leq \exp\left(-(1+\epsilon)\log\log\EE F(\sigma_k)^2\right).
	\end{align*}
	By Lemma \ref{lemma riemann sums}, we have
	$\log\log \EE F(\sigma_k)^2\geq \log\log \frac{1}{2\sigma_k-1}=(1-\delta)\log k$. We also have $(1+\epsilon)(1-\delta)=1+\gamma$, where 
	$\gamma=\epsilon/2-\epsilon^2/2>0$, provided that $\epsilon>0$ is small. Thus
	\begin{align*}
	\PP\left(\bar F(\sigma_k)\geq \sqrt{2(1+\epsilon)\log\log \EE F(\sigma_k)^2}\right)\leq \exp(-(1+\gamma)\log k)=\frac{1}{k^{1+\gamma}}.
	\end{align*}
	Hence, 
	\begin{align*}
	\sum_{k=1}^\infty\PP(F(\sigma_k)\geq \sqrt{2(1+\epsilon)\EE F(\sigma_k)^2\log\log \EE F(\sigma_k)^2})<\infty.
	\end{align*}
	The Borel-Cantelli Lemma completes the proof. \end{proof}

\subsection*{Step 3}
\begin{lemma}\label{lemma controle entre os intervalos} Let $\sigma_k=\frac{1}{2}+\frac{1}{2\exp(k^{1-\delta})}$, where $\delta>0$ is a fixed small constant. For $\PP$ almost all $\omega\in\Omega$, there exists $k_0=k_0(\omega)$ such that for $k\geq k_0$, we have that
	\begin{equation*}
	\max_{\sigma\in[\sigma_k,\sigma_{k-1}]}|F(\sigma)-F(\sigma_k)|\ll \sqrt{\EE F (\sigma_k)^2}.
	\end{equation*}
\end{lemma}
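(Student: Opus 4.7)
The plan is to carry out the Kolmogorov--\v{C}entsov-style dyadic chaining sketched in the introduction. For each $l\ge 0$ and $0\le n\le 2^l$ set $\tau_{l,n}(k)=\sigma_k+\frac{n}{2^l}(\sigma_{k-1}-\sigma_k)$, and define the level-$l$ maximum increment
\[
D_l(k)\ :=\ \max_{0\le n<2^l}\bigl|F(\tau_{l,n+1}(k))-F(\tau_{l,n}(k))\bigr|.
\]
Since $F$ is continuous on $(1/2,\infty)$, any $\sigma\in[\sigma_k,\sigma_{k-1}]$ is the limit of the nested dyadic approximations $\sigma^{(l)}$ from below, where $\sigma^{(l+1)}-\sigma^{(l)}$ is either $0$ or one dyadic step of length $2^{-l-1}(\sigma_{k-1}-\sigma_k)$. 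Telescoping and bounding each increment by $D_{l+1}(k)$ yields the deterministic bound
\[
\max_{\sigma\in[\sigma_k,\sigma_{k-1}]}|F(\sigma)-F(\sigma_k)|\ \le\ \sum_{l=1}^\infty D_l(k),
\]
so it is enough to show that $\sum_{l=1}^\infty D_l(k)\ll\sqrt{\EE F(\sigma_k)^2}$ for all sufficiently large $k$ almost surely.

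Each increment is a Rademacher series $F(\tau_{l,n+1}(k))-F(\tau_{l,n}(k))=\sum_m X_m(m^{-\tau_{l,n+1}}-m^{-\tau_{l,n}})$. By the mean value theorem applied to $s\mapsto m^{-s}$, the coefficients satisfy $|m^{-\tau_{l,n+1}}-m^{-\tau_{l,n}}|\le 2^{-l}(\sigma_{k-1}-\sigma_k)m^{-\sigma_k}\log m$. Combining this with the estimate $\sum_{m\ge 1}m^{-2\sigma_k}(\log m)^2=\zeta''(2\sigma_k)\ll(2\sigma_k-1)^{-3}$ (obtained by the same integral comparison as in Lemma \ref{lemma riemann sums}, using $\int_0^\infty u^2 e^{-(2\sigma-1)u}\,du=2/(2\sigma-1)^3$), one gets uniformly in $n$
\[
\EE|F(\tau_{l,n+1}(k))-F(\tau_{l,n}(k))|^2\ \ll\ \frac{(\sigma_{k-1}-\sigma_k)^2}{4^l\,(2\sigma_k-1)^3}.
\]
A first-order Taylor expansion of $\sigma_{k-1}-\sigma_k=\tfrac12(e^{-(k-1)^{1-\delta}}-e^{-k^{1-\delta}})$ gives $\sigma_{k-1}-\sigma_k\sim\frac{1-\delta}{2}(2\sigma_k-1)k^{-\delta}$, so the above $L^2$-bound is $\ll 4^{-l}k^{-2\delta}\,\EE F(\sigma_k)^2$.

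With $t_l(k):=c\sqrt{\log k}\,2^{-l/2}k^{-\delta}\sqrt{\EE F(\sigma_k)^2}$, Hoeffding's inequality (Lemma \ref{hoeffding}) applied to each fixed $(l,n)$ and a union bound over the $2^l$ indices at level $l$ yield
\[
\PP\bigl(D_l(k)\ge t_l(k)\bigr)\ \le\ 2^{l+1}\exp\!\bigl(-c_0 c^2\,\log k\cdot 2^l\bigr)
\]
for some $c_0>0$ depending only on the implicit constant of the previous display. Summing over $l\ge 1$ the dominant term is $l=1$, producing a bound $\ll k^{-2c_0 c^2}$, which is summable in $k$ once $c$ is taken large enough. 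The first Borel--Cantelli lemma then gives, almost surely for all $k$ large,
\[
\sum_{l=1}^\infty D_l(k)\ \le\ \sum_{l=1}^\infty t_l(k)\ \ll\ \sqrt{\log k}\,k^{-\delta}\sqrt{\EE F(\sigma_k)^2}\ \ll\ \sqrt{\EE F(\sigma_k)^2},
\]
since $\sqrt{\log k}\,k^{-\delta}\to 0$. The only delicate point is the matching of scales: the factor $4^{-l}$ coming from $\EE|\Delta_{l,n}|^2$ must absorb the combinatorial factor $2^l$ from the union bound and the Gaussian threshold $t_l^2$, which it does precisely because $t_l$ decays like $2^{-l/2}$; the extra factor $k^{-\delta}$ afforded by the particular sequence $\sigma_k$ is exactly what makes the Hoeffding tail summable in $k$.
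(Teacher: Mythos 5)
Your proof is correct and follows the same chaining strategy as the paper: dyadic subdivision of $[\sigma_k,\sigma_{k-1}]$, the mean-value/$\zeta''$ variance estimate for the level-$l$ increments (of order $4^{-l}k^{-2\delta}\,\EE F(\sigma_k)^2$, using that $\sigma_{k-1}-\sigma_k\asymp(2\sigma_k-1)k^{-\delta}$), Hoeffding's inequality together with a union bound over the $2^l$ increments at each level, and Borel--Cantelli in $k$. The only organizational difference lies in the chaining step: you telescope one-sidedly from $\sigma_k$ along the nested dyadic approximations $\sigma^{(l)}\nearrow\sigma$, which yields the clean deterministic inequality $\max_{\sigma}|F(\sigma)-F(\sigma_k)|\le\sum_{l\ge 1}D_l(k)$ before any probability enters, whereas the paper runs the classical two-sided Kolmogorov--\v{C}entsov induction (controlling $|F(s)-F(t)|$ for all nearby dyadic pairs $s,t$) on the event $\{U_k\le 1\}$ and then finishes with a midpoint argument; your variant is slightly more streamlined and is exactly adapted to a statement in which one endpoint of the increment is fixed at $\sigma_k$.
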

\begin{proof}
	For a non negative integer $l$, we define $\tau_{l,0}=\tau_{l,0}(k)=\sigma_k$ and $\tau_{l,n}=\tau_{l,n}(k)=\sigma_k+\frac{n}{2^l}(\sigma_{k-1}-\sigma_k)$, where $0\leq n\leq 2^l$. Let $\lambda_{k,l}$ be a constant to be chosen later and consider the event
	\begin{equation*}
	\mathcal{A}_{l,k}=[\max_{0\leq n\leq 2^l-1}|F(\tau_{l,n+1})-F(\tau_{l,n})|\geq \lambda_{k,l}].
	\end{equation*}
	Let 
	\begin{equation*}
	U_k(\omega)=\min\bigg{\{}u\in\NN:\omega\in \bigcap_{l=u}^\infty \mathcal{A}_{l,k}^c \bigg{\}}.
	\end{equation*}
	One can check that $[U_k\leq L]=\bigcap_{l=L}^\infty \mathcal{A}_{l,k}^c$. Thus
	\begin{align*}
	\PP(U_k>L)\leq \sum_{l=L}^\infty \PP(\mathcal{A}_{l,k})\leq \sum_{l=L}^\infty \sum_{n=0}^{2^l-1}\PP(|F(\tau_{l,n+1})-F(\tau_{l,n})|\geq \lambda_{k,l}).
	\end{align*}
	Next, we will estimate each probability in the inner sum above. We have, by the mean value theorem, that
	\begin{align*}
	F(\tau_{l,n+1})-F(\tau_{l,n})=(\tau_{l,n+1}-\tau_{l,n})\sum_{m=1}^\infty -X_mm^{-\theta_{l,n,m}}\log m,  
	\end{align*} 
	where $\theta_{l,n,m}\in(\tau_{l,n}, \tau_{l,n+1})$. Thus, 
	\begin{align*}
	\EE |F(\tau_{l,n+1})-F(\tau_{l,n})|^2&=(\tau_{l,n+1}-\tau_{l,n})^2\sum_{m=1}^\infty m^{-2\theta_{l,n,m}}\log^2 m\\
	&\leq \frac{(\sigma_{k}-\sigma_{k-1})^2}{4^l}\sum_{m=1}^\infty m^{-2\sigma_k}\log^2 m\\
	&\ll \frac{(\sigma_{k}-\sigma_{k-1})^2}{4^l}\int_1^\infty t^{-2\sigma_k}\log^2 t dt\\
	&= \frac{(\sigma_{k}-\sigma_{k-1})^2}{4^l}\int_0^\infty t^2 \exp(-(2\sigma_k-1)t)dt\\
	&=  \frac{(\sigma_{k}-\sigma_{k-1})^2}{4^l}\frac{2}{(2\sigma_k-1)^3}\\
	&\ll \frac{(\exp(-k^{1-\delta})/k^\delta)^2}{4^l}\frac{1}{(\exp(-k^{1-\delta}))^3}\\
	&= \frac{\exp(k^{1-\delta})}{4^lk^{2\delta}}.\\
	\end{align*}
	Thus, by the Hoeffding inequality, for some constant $c_0$, we have that
	\begin{align*}
	\PP(|F(\tau_{l,n+1})-F(\tau_{l,n})|\geq \lambda_{k,l})\leq \exp\bigg{(}-c_0\frac{\lambda_{k,l}^2}{2}\frac{4^lk^{2\delta}}{\exp(k^{1-\delta})}  \bigg{)},
	\end{align*}
	and hence
	\begin{align*}
	\PP(U_k>1)&\leq \sum_{l=1}^\infty 2^l\exp\bigg{(}-\frac{c_0\lambda_{k,l}^2}{2}\frac{4^lk^{2\delta}}{\exp(k^{1-\delta})}\bigg{)}\\
	&=\sum_{l=1}^\infty\exp\bigg{(}-\frac{c_0\lambda_{k,l}^2}{2}\frac{4^lk^{2\delta}}{\exp(k^{1-\delta})}+l\log 2\bigg{)}.
	\end{align*}
	Choose 
	\begin{equation*}
	\lambda_{k,l}^2=\frac{2}{c_0}\frac{\exp(k^{1-\delta})}{4^l}l.
	\end{equation*}
	Hence,
	\begin{align*}
	\PP(U_k>1)&\leq\sum_{l=1}^\infty\exp((-k^{2\delta}+\log 2)l)\ll \exp(-k^{2\delta}).
	\end{align*}
	Thus, $\sum_{k=1}^\infty \PP (U_k>1)<\infty$, and hence, by the Borel-Cantelli Lemma, there exists a set $\Omega^*$ of probability $1$ such that for all $\omega\in\Omega^*$, $U_k(\omega)=1$, for $k\geq k_0(\omega)$. 
	
	Let $ D_{l,k}=\{\tau_{l,n}:0\leq n\leq 2^l\}$ and put $D_k=\bigcup_{l=0}^\infty D_{l,k} $. We shall fix $\omega\in\Omega^*$ and $m\geq n\geq 1$ where $k\geq k_0(\omega)$ and show that for $0<|s-t|<\frac{|\sigma_k-\sigma_{k-1}|}{2^n}$, $|F(s)-F(t)|\leq 2\sqrt{\frac{2}{c_0}} \exp(k^{1-\delta}/2)\sum_{j=n+1}^m\frac{\sqrt{j}}{2^{j}}$, for all $t,s\in D_{m,k}$. Indeed, for $m=n+1$, we can only have that $|s-t|=|\tau_{m,n}-\tau_{m,n+1}|$, and hence $|F(s)-F(t)|\leq \lambda_{k,m}= \sqrt{\frac{2}{c_0}} \exp(k^{1-\delta}/2)\frac{\sqrt{m}}{2^{ m}}$. Suppose now that the claim is true for $m=n+1,...,M-1$ and consider $m=M$.
	Let $s,t\in D_{M,k}$ with $0<|s-t|<\frac{|\sigma_k-\sigma_{k-1}|}{2^n}$. Consider $t'=\max\{u\leq t: u\in D_{M-1,k}\}$ and $s'=\min\{u\geq s: u\in D_{M-1,k}\}$. Thus
	\begin{align*}
	|F(s)-F(t)|&\leq |F(s)-F(s')|+|F(t)-F(t')|+|F(s')-F(t')|\\
	&\leq 2\lambda_{k,M}+2\sqrt{\frac{2}{c_0}}\exp(k^{1-\delta}/2)\sum_{j=n+1}^{M-1}\frac{\sqrt{j}}{2^{j}}\\
	&=2\sqrt{\frac{2}{c_0}}\exp(k^{1-\delta}/2)\sum_{j=n+1}^{M}\frac{\sqrt{j}}{2^{j}}.
	\end{align*}
	Now, for any $s,t\in D_k$ with $|s-t|\leq \frac{|\sigma_k-\sigma_{k-1}|}{2}$, select $n$ such that $\frac{|\sigma_k-\sigma_{k-1}|}{2^{n+1}}\leq|s-t|<\frac{|\sigma_k-\sigma_{k-1}|}{2^n}$. Thus 
	\begin{equation*}
	|F(s)-F(t)|\leq 2\sqrt{\frac{2}{c_0}}\exp(k^{1-\delta}/2)\sum_{j=n+1}^{\infty}\frac{\sqrt{j}}{2^{j}}\ll \exp(k^{1-\delta}/2).
	\end{equation*} 
	As $D_k$ is dense in the interval $[\sigma_k,\sigma_{k-1}]$ and $F$ is analytic, in particular it is continuous, we conclude that $|F(s)-F(t)|\ll \exp(k^{1-\delta}/2)$, for all $s,t\in[\sigma_k,\sigma_{k-1}]$ with $|s-t|\leq \frac{|\sigma_k-\sigma_{k-1}|}{2}$. Finally, observe that $\exp(k^{1-\delta}/2)=\frac{1}{\sqrt{2\sigma_k-1}}$, and that for $\sigma\in[\sigma_k, \sigma_{k-1}]$, $|\sigma-(\sigma+\sigma_k)/2|=|\sigma_k-(\sigma+\sigma_k)/2|\leq \frac{|\sigma_k-\sigma_{k-1}|}{2}$, and hence
	\begin{align*}
	|F(\sigma)-F(\sigma_k)|&\leq |F(\sigma)-F((\sigma_k+\sigma)/2)|+|F(\sigma_k)-F((\sigma_k+\sigma)/2)|\\
	&\ll \frac{1}{\sqrt{2\sigma_k-1}}.
	\end{align*}
	Since $\frac{1}{2\sigma_k-1}\leq\EE F(\sigma_k)^2$ (see Lemma \ref{lemma riemann sums}), the proof is completed.
\end{proof}

\subsection*{Step 4} 
\begin{lemma}\label{lema grand finale}
We have that
\begin{equation*}
\PP \left(\limsup_{\sigma\to 1/2^+}\frac{\bar{F}(\sigma)}{\sqrt{2\log\log \EE F(\sigma)^2}}\leq 1+\gamma\right)=1.
\end{equation*}
\end{lemma}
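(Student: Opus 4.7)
The plan is to glue together Lemma \ref{lemma controle superior sobre subsequencia}, which controls $F$ on the subsequence $\sigma_k$, with Lemma \ref{lemma controle entre os intervalos}, which controls the oscillation of $F$ across each interval $[\sigma_k,\sigma_{k-1}]$. Given $\gamma>0$, I would choose $\epsilon>0$ small enough so that $\sqrt{1+\epsilon}<1+\gamma$, set $\delta=\epsilon/2$, and take $\sigma_k=\tfrac{1}{2}+\tfrac{1}{2\exp(k^{1-\delta})}$ as in those two lemmas. Intersecting the two full-probability events they furnish, one obtains a set $\Omega^*$ with $\PP(\Omega^*)=1$ on which there exists $k_0=k_0(\omega)$ such that for every $k\geq k_0$ and every $\sigma\in[\sigma_k,\sigma_{k-1}]$,
\begin{equation*}
F(\sigma)\leq F(\sigma_k)+|F(\sigma)-F(\sigma_k)|\leq \sqrt{2(1+\epsilon)\,\EE F(\sigma_k)^2\,\log\log\EE F(\sigma_k)^2}+C\sqrt{\EE F(\sigma_k)^2},
\end{equation*}
for some constant $C=C(\omega)$.

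The next step is to divide both sides by $\sqrt{2\,\EE F(\sigma)^2\,\log\log\EE F(\sigma)^2}$ and verify that as $k\to\infty$,
\begin{equation*}
\sup_{\sigma\in[\sigma_k,\sigma_{k-1}]}\frac{\EE F(\sigma_k)^2}{\EE F(\sigma)^2}\longrightarrow 1 \quad\text{and}\quad \sup_{\sigma\in[\sigma_k,\sigma_{k-1}]}\frac{\log\log\EE F(\sigma_k)^2}{\log\log\EE F(\sigma)^2}\longrightarrow 1,
\end{equation*}
while the additive error $C\sqrt{\EE F(\sigma_k)^2}/\sqrt{2\EE F(\sigma)^2\log\log\EE F(\sigma)^2}$ vanishes. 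By Lemma \ref{lemma riemann sums}, $\EE F(\sigma)^2\sim (2\sigma-1)^{-1}$ as $\sigma\to 1/2^+$, so the crucial ratio is dominated by
\begin{equation*}
\frac{2\sigma_{k-1}-1}{2\sigma_k-1}=\exp\bigl(k^{1-\delta}-(k-1)^{1-\delta}\bigr),
\end{equation*}
which tends to $1$ because $k^{1-\delta}-(k-1)^{1-\delta}\sim (1-\delta)k^{-\delta}\to 0$; this is precisely the point at which the choice of exponent $1-\delta$ (with $\delta>0$) in the definition of $\sigma_k$ from Step 2 is used. The log-log ratio then trivially tends to $1$, and the additive error vanishes since a constant is dwarfed by the $\sqrt{\log\log\EE F(\sigma)^2}$ factor.

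Putting everything together, on $\Omega^*$ one has
\begin{equation*}
\limsup_{\sigma\to 1/2^+}\frac{\bar F(\sigma)}{\sqrt{2\log\log\EE F(\sigma)^2}}\leq \sqrt{1+\epsilon}<1+\gamma,
\end{equation*}
which is the desired statement. I do not expect a genuine obstacle at this step: the hard analytic work is already done in Lemmas \ref{lemma controle superior sobre subsequencia} and \ref{lemma controle entre os intervalos}, and Step 4 is essentially the bookkeeping that promotes the subsequential bound to a continuous-parameter bound. The only mildly delicate point is the asymptotic $k^{1-\delta}-(k-1)^{1-\delta}\to 0$, which explains why the slowly-growing exponent $1-\delta$ (as opposed to the $1+\delta$ used in Step 1) was imposed on $\sigma_k$.
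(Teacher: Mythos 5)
Your proposal is correct and follows essentially the same route as the paper: decompose $F(\sigma)=F(\sigma_k)+(F(\sigma)-F(\sigma_k))$ on $[\sigma_k,\sigma_{k-1}]$, apply Lemma \ref{lemma controle superior sobre subsequencia} to the first term and Lemma \ref{lemma controle entre os intervalos} to the second, and observe via Lemma \ref{lemma riemann sums} that the ratio of normalizers $\exp(k^{1-\delta}-(k-1)^{1-\delta})\to 1$ because of the sub-linear exponent $1-\delta$. Your explicit remark contrasting the exponent $1-\delta$ here with the $1+\delta$ needed in Step 1 is exactly the right thing to flag, and the argument is complete.
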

\begin{proof}
Let $k_0=k_0(\omega)$ be as in Lemma \ref{lemma controle entre os intervalos}, and $1/2<\sigma<\sigma_{k_0}$. By Lemma \ref{lemma riemann sums}, we have that for all $k$, holds 
\begin{equation}\label{riemann sigmak}
\exp\left(k^{1-\delta}\right)\leq\EE F(\sigma_k)^2\leq 1+\exp\left(k^{1-\delta}\right). 
\end{equation}

Lets us assume that $\sigma\in[\sigma_k, \sigma_{k-1}]$ and write
\begin{equation*}
    \frac{\bar F(\sigma)}{\sqrt{2\log\log \EE F(\sigma)^2}}=\frac{F(\sigma_k)}{\sqrt{2\EE F(\sigma)^2\log\log \EE F(\sigma)^2}}+\frac{F(\sigma)-F(\sigma_k)}{\sqrt{2\EE F(\sigma)^2\log\log \EE F(\sigma)^2}}.
\end{equation*}
By Lemma \ref{lemma controle superior sobre subsequencia} and \eqref{riemann sigmak}, we have
\begin{align*}
\frac{F(\sigma_k)}{\sqrt{2\EE F(\sigma)^2\log\log \EE F(\sigma)^2}}& \leq \sqrt{1+\epsilon}\frac{\sqrt{\EE F(\sigma_k)^2\log\log \EE F(\sigma_k)^2}}{\sqrt{\EE F(\sigma_{k-1})^2\log\log \EE F(\sigma_{k-1})^2}}\\
&\leq\sqrt{1+\epsilon}\left(1+r_\delta(k)\right),
\end{align*}
for a function $r_\delta(k)$ satisfying $\lim_{k\to\infty}r_\delta(k)=0$.

Now, by Lemma \ref{lemma controle entre os intervalos}, and using again \eqref{riemann sigmak}, we have that there exists a constant $c_0$ that does not depend on $k$ such that
\begin{align*}
\frac{F(\sigma)-F(\sigma_k)}{\sqrt{2\EE F(\sigma)^2\log\log \EE F(\sigma)^2}}&\leq \frac{c_0\sqrt{\EE F(\sigma_{k})^2}}{\sqrt{\EE F(\sigma_{k-1})^2\log\log \EE F(\sigma_{k-1})^2}}\\
&\leq s_\delta(k), 
\end{align*}
for a function $s_\delta(k)$ satisfying $\lim_{k\to\infty}s_\delta(k)=0$.

Sending $k\to\infty$ we conclude the proof of Lemma \ref{lema grand finale}.
\end{proof}

\noindent \textbf{Acknowledgements.} We would like to thank the anonymous referees for a careful reading of the paper and for useful suggestions and corrections.

\newpage

{\small{\sc \noindent Marco Aymone \\
Departamento de Matem\'atica, Universidade Federal de Minas Gerais, Av. Ant\^onio Carlos, 6627, CEP 31270-901, Belo Horizonte, MG, Brazil.} \\
\textit{Email address:} marco@mat.ufmg.br}
\vspace{0.5cm}

{\small{\sc \noindent Susana Fr\'ometa \\
Departamento de Matem\'atica, Universidade Federal do Rio Grande do Sul, Av. Bento Gon\c calves, 9500, CEP 91509-900, Porto Alegre, RS, Brazil.} \\
\textit{Email address:} susana.frometa@ufrgs.br}
\vspace{0.5cm}

{\small{\sc \noindent Ricardo Misturini \\
Departamento de Matem\'atica, Universidade Federal do Rio Grande do Sul, Av. Bento Gon\c calves, 9500, CEP 91509-900, Porto Alegre, RS, Brazil.} \\
\textit{Email address:} ricardo.misturini@ufrgs.br}
\vspace{0.5cm}

\end{document}